\theoremstyle{plain}
\newtheorem{theorem}{Theorem}[section]
\newtheorem{proposition}[theorem]{Proposition}
\newtheorem{corollary}[theorem]{Corollary}
\theoremstyle{definition}
\newtheorem{definition}[theorem]{Definition}
\newtheorem*{example} {Example}
\theoremstyle{remark}
\newtheorem{remark}[theorem]{Remark}
\def\to{\rightarrow}
\def\Aut{\operatorname{Aut}}
\def\Hom{\operatorname{Hom}}
\def\Sym{\operatorname{Sym}}
\def\dim{\operatorname{dim}}
\def\Gr{\operatorname{Gr}}
\def\SL{\operatorname{SL}}
\def\PSL{\operatorname{PSL}}
\def\GL{\operatorname{GL}}
\def\PGL{\operatorname{PGL}}
\def\SO{\operatorname{SO}}
\def\Spin{\operatorname{Spin}}
\def\Sp{\operatorname{Sp}}
\def\LGr{\operatorname{LGr}}
\begin{document}

\title[Deformation rigidity of smooth projective symmetric varieties]
{On the deformation rigidity of \\smooth projective symmetric varieties \\with Picard number one \\ 
\it
Sur la d\'eformation rigidit\'e de \\ vari\'et\'es projectives lisses sym\'etriques \\ de nombre de Picard un}

\author{Shin-Young Kim and Kyeong-Dong Park}

\address{Shin-Young Kim \\ Institut Fourier \\ Grenoble 38058, France 
\newline {\it Current Address}: Center for Geometry and Physics \\ Institute for Basic Science (IBS) \\ Pohang 37673, Korea}
\email{Shinyoung.Kim@univ-grenoble-alpes.fr, shinyoungkim@ibs.re.kr}

\address{Kyeong-Dong Park \\ Center for Geometry and Physics \\ Institute for Basic Science (IBS) \\ Pohang 37673, Korea}
\email{kdpark@ibs.re.kr} %, kdpark@kias.re.kr}

\thanks{The first author was supported by Basic Science Research Program through the National Research Foundation of Korea (NRF) funded by the Ministry of Education (2018R1A6A3A03012791), and by Institute for Basic Science in Korea (IBS-R003-D1). 
The second author was supported by IBS-R003-Y1 and IBS-R003-D1, Institute for Basic Science in Korea.}

\subjclass[2010]{Primary 14M27, 14M17, 32G05, 32M12}

\keywords{symmetric varieties, deformation rigidity, variety of minimal rational tangents, prolongation of a linear Lie algebra}

\begin{abstract}
Symmetric varieties are normal equivariant open embeddings of symmetric homogeneous spaces and they are interesting examples of spherical varieties. 
The principal goal of this article is to study the rigidity under K\"{a}hler deformations of smooth projective symmetric varieties with Picard number one. \\
\\
R\'esum\'e. 
Les vari\'et\'es sym\'etriques sont les plongements ouverts normaux \'equivariants des espaces  homog\`enes sym\'etriques et ce sont des exemples int\'eressants de vari\'et\'es 
sph\'eriques. 
L'objectif principal de cet article est d'\'etudier la rigidit\'e sous les d\'eformations k\"{a}hleriennes des vari\'et\'es projectives lisses sym\'etriques de nombre de Picard un.
\end{abstract}

\maketitle

\section{Introduction}

For a connected semisimple algebraic group $G$ over $\mathbb C$ and an involution $\theta$ of $G$, 
the homogeneous space $G/H$ is called a \emph{symmetric homogeneous space}, 
where $H$ is a closed subgroup of $G$ such that $G^{\theta} \subset H \subset N_G(G^{\theta})$ 
(see Section 2.1 for details).
A normal $G$-variety $X$ together with an equivariant open embedding $G/H \hookrightarrow X$ 
of a symmetric homogeneous space $G/H$ is called a \emph{symmetric variety}. 
Our interest in this paper is the rigidity property under K\"{a}hler deformation 
of smooth projective symmetric varieties with Picard number one. 
%whose restricted root system is of type $A_2$. 

From the Kodaira-Spencer deformation theory (cf. \cite{Kodaira}), 
the vanishing of the first cohomology group $H^1(G/P, T_{G/P})$ of a rational homogeneous manifold $G/P$ 
for a parabolic subgroup $P\subset G$ implies 
the local deformation rigidity of $G/P$. 
The \emph{global deformation rigidity} of a rational homogeneous manifold $G/P$ with Picard number one was studied by Hwang and Mok
in \cite{HwM98}, \cite{HM02}, \cite{HwM04b}, \cite{HM05}, \cite{HL2019}:  
a rational homogeneous manifold %$S$ 
with Picard number one, 
different from the orthogonal isotropic Grassmannian $\Gr_q(2, 7)$, is globally rigid. 
This result can be generalized to some kinds of quasi-homogeneous varieties, 
for example, odd Lagrangian Grassmannians \cite{Park} and odd symplectic Grassmannians \cite{HL2019} among smooth projective horospherical varieties with Picard number one.
It is then natural to ask the same questions about smooth projective symmetric varieties.
Recently, the local deformation rigidity has been proven for smooth projective symmetric varieties with Picard number one, 
whose restricted root system is of type $A_2$ in \cite[Proposition 8.4]{FuHw2018-2} or \cite[Theorem 1.1]{BFM}. 
We obtain the global deformation rigidity of two smooth projective symmetric varieties of type $A_2$ under the assumption that the central fibers, of their deformation families, 
are not equivariant compactifications of the vector group $\mathbb C^{n}$, where $n$ is the dimension of the symmetric varieties.
% {\color{blue} By Theorem 1.2 of \cite{FuHw2018-2}, the assumption on the central fiber implies that the cone structure given by the variety of minimal rational tangents on it is not locally flat. }
% Whether one can construct a deformation of a symmetric variety to an equivariant compactification of a vector group remains as an interesting problem. 

\begin{theorem}\label{Main theorem} 
\label{Deformation rigidity of symmetric varieties}
Let $\pi \colon \mathcal X \rightarrow \Delta$ be a smooth projective morphism from a complex manifold $\mathcal X$
to the unit disc $\Delta\subset\mathbb C$. 
Denote by $S$ the smooth equivariant completion with Picard number one of the symmetric homogeneous space 
$\SL(6,\mathbb C)/ \Sp(6, \mathbb C)$ or $E_6/ F_4$. 
Suppose for any $t\in \Delta \backslash \{0\}$,
the fiber $\mathcal X_t=\pi^{-1}(t)$ is biholomorphic to the smooth projective symmetric variety $S$.
Then the central fiber $\mathcal X_0$ is biholomorphic to either $S$ or 
an equivariant compactification of the vector group $\mathbb C^n$, $n = \dim S$.
\end{theorem}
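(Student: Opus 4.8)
\emph{Reduction to a problem about VMRT.} The plan is to apply Hwang--Mok's theory of varieties of minimal rational tangents (VMRT) to the family $\pi$, in the form refined by Fu--Hwang to treat targets whose VMRT has non-zero prolongation. Since $\mathcal X_t\cong S$ is Fano with second Betti number $1$ for every $t\neq 0$, and since $b_2$ is a topological invariant while $\pi$-ampleness of $-K_{\mathcal X/\Delta}$ is an open condition, the central fibre $\mathcal X_0$ is again Fano of Picard number one and $-K_{\mathcal X/\Delta}$ is $\pi$-ample. Choosing a minimal dominating family of rational curves on a general fibre and spreading it out (after shrinking $\Delta$), one obtains a family of minimal rational curves on $\mathcal X$ over $\Delta$ of constant anticanonical degree, and hence, via the tangent map, a closed subvariety $\mathcal C\subset\mathbb P(T_{\mathcal X/\Delta})$ whose fibre over a general point of $\mathcal X_t$, $t\neq 0$, is projectively isomorphic to the VMRT $Z:=\mathcal C_o(S)\subset\mathbb P(T_oS)\cong\mathbb P^{n-1}$ of $S$ at a general point. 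I will use the following features of $Z$, which underlie the local rigidity $H^1(S,T_S)=0$ established in \cite{FuHw2018-2, BFM}: $Z$ is smooth, irreducible and linearly non-degenerate; the Lie algebra $\mathfrak g_0\subset\mathfrak{gl}(n)$ of infinitesimal linear automorphisms of the affine cone over $Z$ has non-zero first prolongation $\mathfrak g_1$; the graded Lie algebra $\mathfrak g:=\mathfrak g_{-1}\oplus\mathfrak g_0\oplus\mathfrak g_1$, with $\mathfrak g_{-1}=\mathbb C^n$, is finite-dimensional; and the associated flat $\mathfrak g$-structure is the one underlying an equivariant compactification of the vector group $\mathbb C^n=\mathfrak g_{-1}$ — this last point is the source of the exceptional case in the theorem.

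\emph{The limiting VMRT.} For a general point $x_0\in\mathcal X_0$ the fibre $\mathcal C_{x_0}\subset\mathbb P(T_{x_0}\mathcal X_0)$ is a flat limit of the $\mathcal C_{x_t}$, hence a degeneration of $Z$ inside $\mathbb P^{n-1}$ with the same Hilbert polynomial, and the genuine VMRT $\mathcal C^{0}_{x_0}$ of the minimal rational curves of $\mathcal X_0$ through $x_0$ is contained in $\mathcal C_{x_0}$. By the standard specialization argument — the minimal anticanonical degree cannot drop on the central fibre, as a lower-degree free curve on $\mathcal X_0$ would deform to nearby fibres, contradicting minimality on $S$ — $\mathcal C^0_{x_0}$ has dimension $\dim Z$. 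The first goal is then to prove that $\mathcal C_{x_0}$ is projectively isomorphic to $Z$. This splits into: (i) the flat limit $\mathcal C_{x_0}$ is reduced and irreducible of dimension $\dim Z$, whence $\mathcal C^0_{x_0}=\mathcal C_{x_0}$; and (ii) $Z$ is rigid as a projective subvariety, i.e. $H^0(Z,N_{Z/\mathbb P^{n-1}})$ is spanned by the image of $\mathfrak{pgl}_n$, so every Hilbert-polynomial-preserving small deformation of $Z\subset\mathbb P^{n-1}$ is a $\PGL_n$-translate of $Z$. Part (ii) is a finite normal-bundle computation, valid for both varieties $Z$ in question, and, together with a dimension count on the total space of $\mathcal C$ that forbids a jump of $\dim\mathcal C_{x_0}$ at a general $x_0$, it yields $\mathcal C_{x_0}\simeq Z$ projectively.

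\emph{Recognition of the central fibre.} Now $\mathcal X_0$ is a Fano manifold of Picard number one whose VMRT at a general point is projectively isomorphic to $Z$. At this stage I would invoke Hwang--Mok's Cartan--Fubini extension \cite{HwM04b} and, more importantly, the recognition principle for VMRT-structures whose model $Z$ has non-zero prolongation, using the finite-dimensional graded Lie algebra $\mathfrak g$ above: the VMRT-structure of $\mathcal X_0$ is locally equivalent to an integrable flat $\mathfrak g$-structure, and on a Fano manifold of Picard number one such a structure globalizes to one of exactly two models — the homogeneous one, realized by $S$, or the flat one attached to $\mathfrak g_1$, whose development integrates to a holomorphic $\mathbb C^n$-action with a dense orbit. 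In the first case, analytic continuation of the local isomorphisms along connecting chains of minimal rational curves produces a biholomorphism $\mathcal X_0\cong S$; in the second case $\mathcal X_0$ is, by definition, an equivariant compactification of the vector group $\mathbb C^n$ with $n=\dim S$. This establishes the dichotomy of the theorem.

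\emph{The main obstacle.} I expect the crux to be steps (i)--(ii) of ``The limiting VMRT'': showing that $Z$ does not degenerate, nor acquire embedded or extra components, in the flat limit at a general point of $\mathcal X_0$. This forces one to use the explicit projective geometry — defining equations and normal bundle — of the two VMRTs, in combination with semicontinuity of the symbol of the VMRT-structure along $\pi$. A secondary difficulty lies in the recognition step: one must show that, in the non-homogeneous alternative, the flat $\mathfrak g$-structure on $\mathcal X_0$ genuinely globalizes to a $\mathbb C^n$-action on all of $\mathcal X_0$ rather than merely on a Zariski-open subset, and it is precisely the abelian-type nature of $\mathfrak g_1$ recorded above that makes this possible.
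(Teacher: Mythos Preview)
Your proposal rests on a factual error that undermines the recognition step. You assert that the VMRT $Z$ (either $\Gr_\omega(2,6)\subset\mathbb P^{13}$ or $\mathbb{OP}^2_0=F_4/P_4\subset\mathbb P^{25}$) has \emph{non-zero} first prolongation $\mathfrak g_1$, and you build the dichotomy on a graded Lie algebra $\mathfrak g_{-1}\oplus\mathfrak g_0\oplus\mathfrak g_1$. In fact $\mathfrak{aut}(\widehat Z)^{(1)}=0$ for both varieties: they do not appear in the Fu--Hwang classification of smooth nondegenerate projective varieties with non-zero prolongation, and this vanishing is precisely what the paper uses. With $\mathfrak g_1=0$ your ``recognition principle for VMRT-structures with non-zero prolongation'' is inapplicable. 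Moreover, $S$ is not homogeneous (only quasi-homogeneous), so the Cartan--Fubini machinery you cite does not furnish a biholomorphism $\mathcal X_0\cong S$ from mere equality of VMRT; no such characterization of $S$ by its VMRT is available.

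The paper's argument is quite different and more elementary once the VMRT of $\mathcal X_0$ is identified. First, the limiting VMRT is handled not by Hilbert-scheme/normal-bundle arguments but by invoking global deformation rigidity of the rational homogeneous manifold $Z$ (Hwang--Mok) to get $\mathcal K_{x_0}\cong Z$, and then using the dimension inequality $\dim Z>\tfrac12\dim S-1$ together with Zak's theorem on tangencies and $b_2(\mathcal X_0)=1$ to force nondegeneracy of the tangent image. Second, the dichotomy comes from a cohomology count, not a structure-recognition theorem: since $\mathfrak{aut}(\widehat Z)^{(k)}=0$ for $k\geq 1$, Corollary~\ref{aut} gives
\[
h^0(\mathcal X_0,T_{\mathcal X_0})=\dim\mathfrak{aut}(\mathcal X_0)\ \leq\ \dim\mathcal X_0+\dim\mathfrak{aut}(\widehat Z)\ =\ \dim\mathfrak{aut}(S)+1,
\]
the last equality being the numerical coincidence $\dim S+\dim(\mathfrak{sp}(6)\oplus\mathbb C)=\dim\mathfrak{sl}(6)+1$ (resp.\ $\dim S+\dim(\mathfrak f_4\oplus\mathbb C)=\dim\mathfrak e_6+1$). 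Constancy of $\chi(T)$ in the family together with $h^1(S,T_S)=0$ then yields $h^1(\mathcal X_0,T_{\mathcal X_0})\leq 1$. If $h^1=0$ the family is locally trivial near $0$ and $\mathcal X_0\cong S$; if $h^1=1$ the inequality is an equality, whence the cone structure on $\mathcal X_0$ is locally flat (Fu--Hwang), and then Theorem~1.2 of \cite{FuHw2018-2} gives that $\mathcal X_0$ is an equivariant compactification of $\mathbb C^n$. Your outline never reaches this Euler-characteristic/semicontinuity mechanism, which is the actual engine of the proof.
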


According to Theorem 2 of \cite{Ruzzi2010}, when smooth projective symmetric varieties with Picard number one have a restricted root system of type $G_2$, 
they are the smooth equivariant completions of either %symmetric spaces 
$G_2/(\SL(2, \mathbb C) \times \SL(2, \mathbb C))$
or $(G_2 \times G_2)/G_2$.
Recently, the smooth equivariant completion of % the homogeneous space 
$G_2/(\SL(2, \mathbb C) \times \SL(2, \mathbb C))$, which is called the \emph{Cayley Grassmannian}, 
has been studied by Manivel \cite{M}. 
Combining geometric descriptions of the Cayley Grassmannian in \cite{M} with the normal exact sequence leads to the local deformation rigidity.

\begin{theorem}
\label{Deformation rigidity of Cayley Grassmannian}
The smooth equivariant completion $S$ with Picard number one of the symmetric homogeneous space $G_2/(\SL(2, \mathbb C) \times \SL(2, \mathbb C))$ is locally rigid. 
\end{theorem}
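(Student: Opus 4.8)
The plan is to show $H^1(S, T_S) = 0$, which by Kodaira–Spencer theory gives local deformation rigidity. First I would recall from Manivel's paper \cite{M} the explicit geometric description of the Cayley Grassmannian $S$: it is a smooth hyperplane-type section (more precisely, the zero locus of a general section of a homogeneous vector bundle) sitting inside the Grassmannian $\Gr(3,7)$ (or $\Gr(4,7)$), cut out by the condition of being isotropic/associative for a general $G_2$-invariant $4$-form on $\mathbb C^7$. Concretely $S$ embeds in $\Gr := \Gr(3,7)$ as the zero locus of a general section $s$ of $\mathcal E := \bigwedge^3 \mathcal U^{\vee} \otimes (\text{trivial or rank adjustments})$, a globally generated homogeneous bundle of rank $\operatorname{codim}_{\Gr} S$. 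The key structural input is the normal exact sequence
\begin{equation}
0 \to T_S \to T_{\Gr}|_S \to N_{S/\Gr} \to 0,
\end{equation}
together with the identification $N_{S/\Gr} \cong \mathcal E|_S$ coming from the section $s$ cutting out $S$ transversally.

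Next I would take the long exact sequence in cohomology associated to this normal sequence:
\begin{equation}
\cdots \to H^0(S, \mathcal E|_S) \xrightarrow{\ \delta\ } H^1(S, T_S) \to H^1(S, T_{\Gr}|_S) \to \cdots
\end{equation}
so it suffices to prove $H^1(S, T_{\Gr}|_S) = 0$ and that $\delta$ is zero, i.e.\ that $H^0(S, T_{\Gr}|_S) \to H^0(S, \mathcal E|_S)$ is surjective. The second condition is exactly the statement that the section $s$ defining $S$ can be deformed arbitrarily (to first order) within the family of sections, modulo the action of $\Aut(\Gr) = \PGL(7,\mathbb C)$ — in other words, that the embedded deformations of $S$ in $\Gr$ all come from moving $s$ by $\PGL(7)$, which is plausible because $G_2$-invariant $4$-forms on $\mathbb C^7$ form a single open $\PGL(7)$-orbit (the generic $4$-form has stabilizer $G_2$, and $\dim \PGL(7) - \dim G_2 = 48 - 14 = 34 = \dim H^0(\Gr,\mathcal E)$ up to the relevant bookkeeping). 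To control the two cohomology groups I would use the Koszul resolution of $\mathcal O_S$ on $\Gr$,
\begin{equation}
0 \to \bigwedge^r \mathcal E^{\vee} \to \cdots \to \bigwedge^2 \mathcal E^{\vee} \to \mathcal E^{\vee} \to \mathcal O_{\Gr} \to \mathcal O_S \to 0,
\end{equation}
tensored with $T_{\Gr}$ (respectively $\mathcal E$), and then compute the hypercohomology by feeding each term $\bigwedge^k \mathcal E^{\vee} \otimes T_{\Gr}$ into Bott–Borel–Weil on $\Gr(3,7)$. Since all sheaves in sight are homogeneous bundles on a rational homogeneous space, BBW reduces everything to a finite, if lengthy, combinatorial check on dominant weights; the vanishing $H^1(\Gr, T_{\Gr}) = 0$ and the rigidity of $\Gr$ itself are already known and serve as a sanity anchor.

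The main obstacle I expect is the surjectivity of $H^0(S, T_{\Gr}|_S) \to H^0(S,\mathcal E|_S)$ — equivalently, ruling out any "extra" embedded first-order deformation of $S \subset \Gr$ beyond those induced by $\PGL(7,\mathbb C)$. The cohomological vanishing $H^1(S, T_{\Gr}|_S)=0$ should follow fairly mechanically from the Koszul/BBW computation (all the relevant $H^{k+1}(\Gr, \bigwedge^k\mathcal E^\vee \otimes T_{\Gr})$ vanish for weight reasons), but the connecting map requires understanding $H^0$'s precisely: one must show $h^0(S,\mathcal E|_S)$ equals the dimension of the $\PGL(7)$-orbit of the defining form minus the dimension of the stabilizer $G_2$ of a generic point, and that the orbit map realizes all of $H^0(S,\mathcal E|_S)$. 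Equivalently, I would verify $h^0(S, \mathcal E|_S) = h^0(\Gr, \mathcal E) = 35$ (the space of $4$-forms on $\mathbb C^7$) using the Koszul complex — here one needs the higher terms $H^i(\Gr, \bigwedge^{i}\mathcal E^\vee \otimes \mathcal E)$ to vanish — and then note that the differential of the $\PGL(7)$-action surjects onto $H^0(S,\mathcal E|_S)$ precisely because the generic $4$-form is $\PGL(7)$-stable with stabilizer $G_2$, so the normal space to the orbit is zero. Once both $H^1(S, T_{\Gr}|_S) = 0$ and the surjectivity of $\delta$'s source map are established, the long exact sequence forces $H^1(S, T_S) = 0$ and the theorem follows.
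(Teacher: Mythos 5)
Your plan follows the same backbone as the paper's proof: realize $S$ as the zero locus of a general section $s$ of the rank-four homogeneous bundle $\wedge^3 \mathcal U^*$ on $\Gr(4,7)$ (Manivel), use the normal exact sequence with $N_{S/\Gr}\cong \wedge^3\mathcal U^*|_S$, resolve $\mathcal O_S$ by the Koszul complex, and compute everything by Borel--Weil--Bott, ending with Kodaira--Spencer. The one place you genuinely diverge is the final step: the paper does not prove surjectivity of $H^0(S,T_{\Gr}|_S)\to H^0(S,N_{S/\Gr})$ via the group action directly, but instead plugs Ruzzi's computation $H^0(S,T_S)=\mathfrak{aut}(S)=\mathfrak g_2$ into the four-term sequence $0\to H^0(S,T_S)\to\mathfrak{sl}_7\to \wedge^3\mathbb C^7/\mathbb C\to H^1(S,T_S)\to 0$ and closes by a dimension count ($48-14=34$). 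Your alternative --- that the map is (the restriction of) $A\mapsto L_A s$ and is surjective because the generic three-form has open $\GL(7,\mathbb C)$-orbit with stabilizer $G_2$ --- is valid and has the advantage of not invoking Ruzzi's Lemma~16; indeed it reproves $\mathfrak{aut}(S)=\mathfrak g_2$ as a byproduct. One correction is essential for your bookkeeping to close: $h^0(S,\mathcal E|_S)$ is $34$, not $35$, because the restriction map $H^0(\Gr,\mathcal E)\cong\wedge^3(\mathbb C^7)^*\to H^0(S,\mathcal E|_S)$ kills exactly the line spanned by $s$ (this is the $H^0(\Gr,\mathcal E^\vee\otimes\mathcal E)=\mathbb C$ term in your twisted Koszul complex, and it is why the paper gets $\wedge^3\mathbb C^7/\mathbb C$); with the value $35$ your own count $\dim\mathfrak{sl}_7-\dim\mathfrak g_2=34$ would make surjectivity impossible. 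With that adjustment, and noting that in the surjectivity argument the scalar direction of $\mathfrak{gl}_7$ maps precisely onto the line $\langle s\rangle$ that dies on restriction, your argument goes through.
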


In Section 2, we will review the classification results of smooth projective symmetric varieties with Picard number one and some general results about the variety of minimal rational tangents (VMRT). 
Moreover, we will prove the deformation rigidity of VMRT as a projective manifold under the assumption in Theorem \ref{Main theorem}. 
In Section 3, we relate the automorphism group of a projective variety with the prolongations of the Lie algebra of infinitesimal automorphisms of the cone structure given by its VMRT. 
By considering the smooth projective symmetric varieties with Picard number one of type $A_2$ and the affine cones of their VMRTs, we can prove Theorem \ref{Main theorem}. 
In Section 4, we prove Theorem \ref{Deformation rigidity of Cayley Grassmannian} using the Koszul complex associated to the Cayley Grassmannian and the Borel-Weil-Bott theorem. %, seperately.

\vskip 1em

\noindent
\textbf{Acknowledgements}.
The authors would like to thank Michel Brion, Laurent Manivel, Jaehyun Hong, Nicolas Perrin, Baohua Fu and Qifeng Li for discussions on this topic and useful comments. 
The first author is especially appreciate to Michel Brion and Institut Fourier for her one year visiting during 2018/19. 
The second author is also grateful to Institut Fourier in Grenoble for his visiting, where his part of the present work was mostly completed in October 2018. 
We are grateful to the referee for his/her helpful comments that improves the presentation of this paper.

\section{Symmetric varieties and VMRT}

% In this section, we review the classification results of smooth projective symmetric varieties with Picard number one,  
%% and the relevant facts about geometric $G$-structures and cone structures on a complex manifold, 
%and some basic facts for the variety of minimal rational tangents (VMRT) on uniruled projective manifolds. 

\subsection{Smooth projective symmetric varieties with Picard number one}

Let $G$ be a connected semisimple algebraic group over $\mathbb C$
and $\theta$ be an \emph{involution} of $G$, i.e., 
a nontrivial automorphism $\theta \colon G \to G$ such that $\theta^2 = id$. 

\begin{definition} 
Let $G^{\theta}=\{ g \in G : \theta(g)=g \}$. 
\begin{enumerate}
\item 
When $H$ is a closed subgroup of $G$ such that $G^{\theta} \subset H \subset N_G(G^{\theta})$, 
we say that the homogeneous space $G/H$ is a \emph{symmetric (homogeneous) space}. 
Here, $N_G(G^{\theta})$ means the normalizer of $G^{\theta}$ in $G$. 

\item
A normal $G$-variety $X$ together with an equivariant open embedding $G/H \hookrightarrow X$ 
of a symmetric space $G/H$ is called a \emph{symmetric variety}.
\end{enumerate}
\end{definition}

\begin{example} 
%\begin{enumerate}
%\item 
(1) For $G=\SL(n, \mathbb C) \times \SL(n, \mathbb C)$ and the involution $\theta(x, y)=(y,x)$, 
$G^{\theta} % = \Delta 
= \{ (x, x) \in \SL(n, \mathbb C) \times \SL(n, \mathbb C) \} \cong \SL(n, \mathbb C)$. 
In particular, if $n=2$ and $H=G^{\theta}$, 
then the symmetric space $G/H \cong \SL(2, \mathbb C)$ is a closed subvariety of $Mat_{2\times 2}(\mathbb C) \cong \mathbb C^4$. 
Let's consider an equivariant open embedding of $G/H$: 
\begin{eqnarray*}
G/H %\cong \SL(2, \mathbb C) 
&\hookrightarrow & X:=\{ [x : t] : \det(x) = t^2 \} \subset \mathbb P(Mat_{2\times 2}(\mathbb C) \oplus \mathbb C) \\
x &\mapsto & [x:1].
\end{eqnarray*}
Thus, the symmetric variety $X$ is the 3-dimensional hyperquadric $\mathbb Q^3 \subset \mathbb P^4$.

%\item
(2) For $G=\SL(3, \mathbb C)$ and the involution $\theta(g)=(g^t)^{-1}$, we get $G^{\theta} = \SO(3, \mathbb C)$. 
The irreducible representation $V_{\SL(3, \mathbb C)}(2\varpi_1) = \Sym^2 \mathbb C^3$ %with highest weight $2\varpi_1$ 
is decomposed into $\Sym^2 \mathbb C^3 \cong V_{\SO(3, \mathbb C)}(4\varpi_1) \oplus V_{\SO(3, \mathbb C)}(0) = \mathbb C^5 \oplus \mathbb C$ as $\SO(3, \mathbb C)$-modules. 
From this result, we have an equivariant open embedding 
$\SL(3, \mathbb C)/N_G(\SO(3, \mathbb C)) \hookrightarrow \mathbb P(\Sym^2 \mathbb C^3) \cong \mathbb P^5 = X.$
%\end{enumerate}
\end{example}

Vust \cite[Theorem 1 in Section 1.3]{Vust} proved that a symmetric space $G/H$ is \emph{spherical}, i.e., 
it has an open orbit under the action of a Borel subgroup of $G$.
By using the Luna-Vust theory on spherical varieties, 
Ruzzi \cite{Ruzzi2011} classified the smooth projective symmetric varieties with Picard number one using colored fans.

\begin{theorem}[Theorem 1 of \cite{Ruzzi2010}]
Let $X$ be a smooth equivariant completion of a symmetric space $G/H$ with Picard number one. 
Then $X$ is nonhomogeneous if and only if 
\begin{enumerate}
\item
the restricted root system $\{\alpha - \theta(\alpha) : \alpha \in R_G\} \backslash \{ 0 \}$ has type either $A_2$ or $G_2$, 
where $R_G$ denotes the root system of $G$, and 
\item
$H=G^{\theta}$ (the closed subgroup of invariants of $\theta$).
\end{enumerate}
Given a symmetric space $G/H$, there is at most one embedding of $G/H$ with these properties. 
Furthermore, all these varieties are projective and Fano. 
\end{theorem}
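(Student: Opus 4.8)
The plan is to run the Luna--Vust classification of spherical embeddings in the special case of a symmetric space, and then to read off from the combinatorics which colored fans yield a smooth projective variety of Picard number one. First I would recall the combinatorial datum of the spherical homogeneous space $G/H$: after fixing a Borel subgroup $B\subset G$, let $\mathcal M$ be the lattice of $B$-weights of $B$-semiinvariant rational functions on $G/H$, put $\mathcal N=\Hom(\mathcal M,\mathbb Z)$, let $\mathcal V\subset\mathcal N_{\mathbb Q}$ be the valuation cone, and let $\mathcal D$ be the finite set of colors (the $B$-stable prime divisors of $G/H$) with the assignment $\rho\colon\mathcal D\to\mathcal N$. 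For symmetric spaces these are classical (Vust, de Concini--Procesi): $\rank\mathcal M=r$ is the rank of $G/H$, which coincides with the rank of the restricted root system $R_\theta=\{\alpha-\theta(\alpha):\alpha\in R_G\}\setminus\{0\}$; the valuation cone $\mathcal V$ is the anti-dominant chamber of the little Weyl group acting on $\mathcal N_{\mathbb Q}$, hence a full-dimensional, pointed, simplicial cone with $r$ extremal rays; the lattice $\mathcal M$ equals the restricted root lattice $\mathbb Z R_\theta$ when $H=N_G(G^{\theta})$ and a finite-index overlattice of it (inside the restricted weight lattice) when $H=G^{\theta}$; and the colors, together with the vectors $\rho(\mathcal D)$, are described explicitly in terms of the simple restricted roots and their coroots, lying in the dominant region, in particular off the relative interior of $\mathcal V$.

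By Luna--Vust theory the smooth complete embeddings of $G/H$ correspond bijectively to the smooth complete colored fans in $(\mathcal N_{\mathbb Q},\mathcal V,\mathcal D,\rho)$, and the projective ones are those carrying a strictly convex support function. For a smooth complete spherical variety $X$ one has $\Pic(X)=\operatorname{Cl}(X)$, and the standard presentation $0\to\mathcal M\to\mathbb Z^{\mathcal D}\oplus\mathbb Z^{\mathcal D_G(X)}\to\operatorname{Cl}(X)\to 0$, where $\mathcal D_G(X)$ is the set of $G$-stable prime divisors of $X$ (in bijection with the rays of its colored fan), shows that the Picard rank of $X$ equals $\#\mathcal D+\#\mathcal D_G(X)-r$. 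Thus Picard number one forces $\#\mathcal D_G(X)=r+1-\#\mathcal D$, which, combined with completeness (the fan must cover $\mathcal N_{\mathbb Q}$, while a colorless cone lies inside the proper cone $\mathcal V$, hence cannot complete it) and smoothness (every maximal colored cone is generated by part of a $\mathbb Z$-basis of $\mathcal N$), leaves only finitely many colored fans to inspect.

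Working through these using the explicit positions of $\mathcal V$ and of $\rho(\mathcal D)$ from the first step — so that a single simplicial maximal cone of full support can occur only in small restricted rank — one finds that every solution produces a variety that is rational homogeneous under its full automorphism group, with the sole exceptions arising when $R_\theta$ has type $A_2$ or $G_2$ and $H=G^{\theta}$: there the colored fan genuinely uses a color, $X$ carries a $G$-stable boundary divisor, and one checks (from the $G$-orbit structure and the classification of homogeneous completions of $G/H$) that $X$ is not homogeneous under any group. Uniqueness for a fixed $G/H$ is built in, since once $\mathcal V$ and $\rho(\mathcal D)$ are fixed the one extra basis vector, hence the colored fan, hence $X$, is determined.

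Finally, projectivity is automatic, because a complete fan consisting of a single maximal cone admits a strictly convex support function; and Fano-ness follows by checking that the piecewise-linear function attached to $-K_X=\sum_{E\in\mathcal D_G(X)}E+\sum_{D\in\mathcal D}m_D D$, with the explicit positive multiplicities $m_D$ for spherical varieties, is strictly convex on that cone, which again reduces to the known positions of $\rho(\mathcal D)$ and of the rays of $\mathcal V$. The main obstacle, I expect, is the first step together with the bookkeeping in the middle: one must pin down, for each symmetric pair, the lattice $\mathcal M$ (which records the distinction between $G^{\theta}$ and its normalizer), the colors and the vectors $\rho(D)$, and the restricted-root multiplicities, and then simultaneously impose completeness, smoothness and Picard number one to extract the short list; the projectivity, Fano and uniqueness assertions are comparatively routine convexity and basis computations once that picture is in place.
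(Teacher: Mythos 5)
This statement is not proved in the paper at all: it is quoted verbatim as Theorem 1 of \cite{Ruzzi2010}, and the underlying classification is done via the Luna--Vust theory of colored fans in \cite{Ruzzi2011}. So your overall strategy is the same as the one used in the literature the paper cites, but your sketch has genuine gaps precisely where the real work in Ruzzi's proof lies. First, the smoothness criterion you invoke (``every maximal colored cone is generated by part of a $\mathbb Z$-basis of $\mathcal N$'') is only valid for cones \emph{without} colors, i.e.\ in the toroidal/toric situation. The nonhomogeneous varieties in the statement arise exactly from colored cones that do contain colors, and there smoothness is a much subtler condition (Luna's/Brion's local structure criterion, reduced by Ruzzi to a case analysis in low rank); without this you cannot decide which of the finitely many candidate fans actually give smooth varieties, which is the heart of the classification. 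Second, the completeness condition is misstated: an embedding is complete when the support of its colored fan contains the valuation cone $\mathcal V$, not all of $\mathcal N_{\mathbb Q}$; in particular colorless complete fans exist (the wonderful compactification of $G/N_G(G^{\theta})$ is toroidal and complete), so your parenthetical argument that a color must be used is not valid and must be replaced by the Picard-number count $\#\mathcal D+\#\mathcal D_G(X)-r=1$ itself, combined with the explicit list of colors for each symmetric pair.

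Third, the assertion that every remaining solution is rational homogeneous, while the $A_2$ and $G_2$ cases with $H=G^{\theta}$ are not homogeneous under any group, is essentially the content of the theorem and does not fall out of the fan combinatorics: one must identify the varieties obtained (projective spaces, quadrics, Grassmannians, etc.) and, in the exceptional cases, prove nonhomogeneity, e.g.\ by determining $\Aut(X)$ (as Ruzzi does, cf.\ Proposition 3 and Lemma 16 of \cite{Ruzzi2010}) or by exhibiting a $G$-fixed structure on the boundary incompatible with homogeneity. Finally, the distinction between $H=G^{\theta}$ and $H=N_G(G^{\theta})$ enters through the lattice $\mathcal M$ and the images $\rho(\mathcal D)$, and ruling out Picard-number-one smooth completions for the normalizer case (and for restricted types other than $A_2$, $G_2$) is a nontrivial part of the bookkeeping that you defer. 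As it stands, the proposal is a reasonable roadmap of the known proof rather than a proof.
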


Moreover, Ruzzi gave a geometric description of smooth projective symmetric varieties with Picard number one 
whose restricted root system is of type $A_2$ (Theorem 3 of \cite{Ruzzi2010}): 
these $A_2$-type symmetric varieties are smooth equivariant completions of symmetric homogeneous spaces 
$\SL(3, \mathbb C)/\SO(3, \mathbb C)$, 
$(\SL(3, \mathbb C) \times \SL(3, \mathbb C))/\SL(3, \mathbb C)$, 
$\SL(6, \mathbb C)/\Sp(6, \mathbb C)$, $E_6/F_4$, 
and are isomorphic to a general hyperplane section of rational homogeneous manifolds 
which are in the third row of the \emph{geometric Freudenthal-Tits magic square} (see \cite{Freudenthal}, \cite{LM}, and Section 3.5 of \cite{LM04}), respectively. 

\begin{center}
\begin{tabular}{ c | c c c c }
\hline
   & $\mathbb R$ & $\mathbb C$ & $\mathbb H$ & $\mathbb O$  \\
 \hline
  $\mathbb R$ & $v_4(\mathbb P^1)$ & $\mathbb P (T_{\mathbb P^2})$ & $\Gr_{\omega}(2, 6)$ & $\mathbb{OP}^2_0$  \\
  $\mathbb C$ & $v_2(\mathbb P^2)$ & $\mathbb P^2 \times \mathbb P^2$ & $\Gr(2, 6)$ & $\mathbb{OP}^2$ \\
  $\mathbb H$ & $\LGr(3, 6)$ & $\Gr(3, 6)$ & $\mathbb S_6$ & $E_7/P_7$ \\
  $\mathbb O$ & $F_4^{ad}$ & $E_6^{ad}$ & $E_7^{ad}$ & $E_8^{ad}$ \\
\hline
\end{tabular}
\end{center}

The fourth row ($\mathbb O$-row) of the square 
consists of the adjoint varieties for the exceptional simple Lie groups except $G_2$.
Taking the varieties of lines through a point, 
one obtains the third row which are \emph{Legendre varieties}. 
The second row is deduced from the third row by the same process, which consists of \emph{Severi varieties}. 
Then by taking general hyperplane sections we get the first row of the square.

\subsection{Variety of minimal rational tangents}

%A \emph{Fano manifold} is a compact complex manifold $X$ with the positive anti-canonical bundle $K_{X}^{-1}$.
%By Mori's theory, Fano manifolds are \emph{uniruled}, i.e., covered by rational curves,
%and rational curves play a crucial role in the study of Fano manifolds
%(cf. \cite{Mori79} and \cite{MM}).
In 1990's Hwang and Mok introduced the notion of the \emph{variety of minimal rational tangents} on uniruled projective manifolds 
(see \cite{HM99} and \cite{Hw00}).
For the study of Fano manifolds, more generally uniruled manifolds,
a basic tool is the deformation % theory 
of rational curves.
The study of the deformation of minimal rational curves
leads to their associated variety of minimal rational tangents
which is defined as the subvariety of the projectivized tangent bundle $\mathbb P (T_X)$
consisting of tangent directions of minimal rational curves
immersed in an uniruled projective manifold $X$.

Let $X$ be a projective manifold of dimension $n$.
By a \emph{parameterized rational curve} we mean a nonconstant
holomorphic map $f \colon \mathbb P^1 \to X$ from the projective line $\mathbb P^1$ into $X$.
We say that a (parameterized) rational curve $f \colon \mathbb P^1 \to X$ is \emph{free} 
if the pullback $f^* T_X$ of the tangent bundle is nonnegative 
in the sense that $f^* T_X$ splits into a direct sum 
$\mathcal O(a_1) \oplus \cdots \oplus \mathcal O(a_n)$ of
line bundles of degree $a_i \geq 0$ for all $i=1, \cdots, n$.
For a polarized uniruled projective manifold $(X, L)$ with an ample line bundle $L$,
a {\it minimal rational curve} on $X$ is a free rational curve of minimal degree among all free rational curves on $X$.

Let $\mathcal J$ be a connected component of the space of minimal rational curves
and let $\mathcal K:=\mathcal J /\Aut(\mathbb P^1)$ be the quotient space of unparameterized minimal rational curves.
We call $\mathcal K$ a {\it minimal rational component}.
For a point $x \in X$
consider the subvariety $\mathcal K_x$ of $\mathcal K$ consisting 
of minimal rational curves belonging to $\mathcal K$ marked at $x$.
Define the \emph{(rational) tangent  map} $\tau_x \colon \mathcal K_x \dashrightarrow \mathbb P(T_xX)$ by
$\tau_x([f(\mathbb P^1)])=[df(T_o\mathbb P^1)]$
sending a member of $\mathcal K_x$ smooth at $x$ to its tangent direction at $x$, 
where $f \colon \mathbb P^1 \rightarrow X$ is a minimal rational curve with $f(o)=x$.
For a general point $x \in X$, 
by Theorem 3.4 of \cite{Ke}, this tangent map induces a morphism 
$\tau_x \colon \mathcal K_x \rightarrow \mathbb P(T_xX)$
which is finite over its image.

\begin{definition}
Let $X$ be a polarized uniruled projective manifold with a minimal rational component $\mathcal K$.
For a general point $x \in X$, the image
$\mathcal C_x(X) :=\tau_x(\mathcal K_x)\subset \mathbb P(T_xX)$
is called the {\it variety of minimal rational tangents}
(to be abbreviated as VMRT) of $X$ at $x$. 
The union of $\mathcal C_x$ over general points $x\in X$ gives
the fibered space $\mathcal C \subset \mathbb P(T_X) \to X$
of varieties of minimal rational tangents associated to $\mathcal K$.
\end{definition}

From now on, $S$ denotes the smooth equivariant completion with Picard number one of 
the symmetric homogeneous space $\SL(6,\mathbb C)/ \Sp(6, \mathbb C)$ or $E_6/ F_4$, respectively. 

\begin{proposition} 
\label{VMRT of S}
For a general point $s \in S$, the VMRT $\mathcal C_s(S)$ of $S$ is projectively equivalent to 
$%\mathcal C_s(S) = \Gr(2, 6) \cap H = 
\Gr_{\omega}(2, 6) \cong C_3/P_2 \subset \mathbb P^{13}$ or 
$\mathbb{OP}^2_0 \cong F_4/P_4 \subset \mathbb P^{25}$, respectively. 
Here, $P_k \subset G$ means the $k$-th maximal parabolic subgroup of $G$ following the Bourbaki ordering. 
\end{proposition}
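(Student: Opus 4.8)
The plan is to identify $S$ with a general hyperplane section of a Legendre variety and to compute the VMRT from that description. By Ruzzi's Theorem 3 of \cite{Ruzzi2010} (quoted above), $S$ is isomorphic to a general hyperplane section of the rational homogeneous manifold $Z$ sitting in the third row of the geometric Freudenthal--Tits magic square: $Z = \mathbb{S}_6$ (the spinor tenfold $D_6/P_6$) when $G/H = \SL(6,\mathbb C)/\Sp(6,\mathbb C)$, and $Z = E_7/P_7$ when $G/H = E_6/F_4$. First I would recall that these Legendre varieties are themselves the VMRT-carrying homogeneous spaces whose own VMRT at a point is, respectively, $\Gr(2,6) \subset \mathbb P^{14}$ (for $\mathbb{S}_6$) and $E_6/P_1 \subset \mathbb P^{26}$ (for $E_7/P_7$), the corresponding Severi varieties in the second row; this is standard from the theory of minimal rational curves on rational homogeneous spaces of Picard number one (lines through a point), cf. \cite{LM}, \cite{LM04}.

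Next I would relate the minimal rational curves of $S$ to those of $Z$. Since $Z \subset \mathbb P(V)$ is cut out in degree two and the lines on $Z$ form its minimal rational component, a general hyperplane section $S = Z \cap \mathbb P(H)$ inherits lines: through a general point $s \in S$, the lines of $Z$ through $s$ that happen to lie in the hyperplane form the VMRT $\mathcal C_s(S)$, and these are exactly the minimal rational curves of $S$ (one must check that the lines remain the minimal free rational curves on $S$, which follows from degree reasons and the fact that $S$ is Fano of the expected index). Projectively, $\mathcal C_s(Z) \subset \mathbb P(T_s Z)$ is the Severi variety, and intersecting with the hyperplane corresponds to a general hyperplane section of the Severi variety inside its linear span. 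Therefore $\mathcal C_s(S)$ is projectively equivalent to a general hyperplane section of $\Gr(2,6)$, respectively of $E_6/P_1$.

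Finally I would identify these hyperplane sections. A general hyperplane section of $\Gr(2,6) \subset \mathbb P^{14}$ is the symplectic (isotropic) Grassmannian $\Gr_\omega(2,6) \cong C_3/P_2 \subset \mathbb P^{13}$: a hyperplane in $\bigwedge^2 \mathbb C^6$ corresponds to a general $2$-form $\omega$, and $\Gr(2,6)$ meets it along the locus of $\omega$-isotropic planes. Likewise, a general hyperplane section of the Severi variety $E_6/P_1 \subset \mathbb P^{26}$ is the $E_6$-variety $\mathbb{OP}^2_0 \cong F_4/P_4 \subset \mathbb P^{25}$, which is precisely the entry in the first row of the magic square obtained from $E_6^{ad}$ by the hyperplane-section process described after the magic square table; this is again recorded in \cite{Ruzzi2010} and \cite{LM}. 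The main obstacle is the second step: one must verify carefully that the lines of $Z$ lying in the hyperplane are genuinely the VMRT of $S$ — i.e., that no lower-degree free rational curves appear on the singular-free general section, and that the tangent map is birational onto the hyperplane section of the Severi variety rather than onto some degeneration — but this follows from the general position of the hyperplane together with the explicit structure of the Legendre varieties, so no new deformation-theoretic input is needed.
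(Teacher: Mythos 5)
Your proposal is correct and takes essentially the same route as the paper: Ruzzi's Theorem 3 identifies $S$ with a general hyperplane section of $\mathbb S_6$ resp. $E_7/P_7$, the VMRT of a general hyperplane section is the corresponding hyperplane section of the VMRT, and general hyperplane sections of $\Gr(2,6)\subset\mathbb P^{14}$ and of $\mathbb{OP}^2 = E_6/P_6 \subset\mathbb P^{26}$ are $\Gr_\omega(2,6)$ and $F_4/P_4$. The step you single out as the main obstacle is exactly Lemma 3.3 of \cite{FuHw}, which the paper simply cites, so no ad hoc verification is needed (minor slips: $D_6/P_6$ is $15$-dimensional, not a tenfold, and $\mathbb{OP}^2_0$ sits in the $\mathbb O$-column below which lies $E_8^{ad}$, not $E_6^{ad}$).
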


\begin{proof}
For a nonsingular projective variety $X$ covered by lines and a general hyperplane section $X \cap H$, 
if $\mathcal C_x \subset \mathbb P (T_x X)$ is the VMRT of $X$ at a general point $x \in X \cap H$ and $\dim \mathcal C_x$ is positive, 
then the VMRT associated to a family of lines covering $X \cap H$ is 
$\mathcal C_x \cap \mathbb P (T_x H) \subset \mathbb P T_x(X \cap H)$
from Lemma 3.3 of \cite{FuHw}.

From Theorem 3 of \cite{Ruzzi2010}, 
the smooth equivariant completion $S$ with Picard number one of the symmetric space $\SL(6,\mathbb C)/ \Sp(6, \mathbb C)$ 
is isomorphic to a general hyperplane section of the 15-dimensional spinor variety $\mathbb S_6$. 
It is known that the VMRT of a rational homogeneous manifold $G/P$ associated to a \emph{long} simple root $\alpha_i$
is isomorphic to the \emph{highest weight variety} defined by the isotropy representation of a Levi factor of $P$ 
from Proposition 1 of \cite{HM02}. 
Note that, in this case, the VMRT of $G/P$ at the base point is the homogeneous manifold associated to 
the marked Dynkin diagram having markings corresponding to the simple roots which are adjacent to $\alpha_i$
in the Dynkin diagram of the semisimple part of $P$.
Thus, the VMRT of $\mathbb S_6$ is isomorphic to the Grassmannian $\Gr(2, 6)$.
Since we have the Pl\"{u}cker embedding of the Grassmannian 
$\Gr(2, 6) \hookrightarrow \mathbb P(\wedge^2 \mathbb C^6) \cong \mathbb P^{14}$ 
and a general hyperplane $H$ in $\mathbb P(\wedge^2 \mathbb C^6)$ is given by 
the kernel of a nondegenerate skew-symmetric 2-form $\omega \in (\wedge^2 \mathbb C^6)^*$, 
the VMRT $\mathcal C_s(S)$ is projectively equivalent to 
the symplectic isotropic Grassmannian $\Gr_{\omega}(2, 6) \subset \mathbb P^{13}$. 

Similarly, the smooth equivariant completion with Picard number one of $E_6/ F_4$ 
is isomorphic to a general hyperplane section of the 27-dimensional Hermitian symmetric space $E_7/P_7$ of compact type by Theorem 3 of \cite{Ruzzi2010}. 
Because the VMRT of %the rational homogeneous manifold 
$E_7/P_7$ is isomorphic to $E_6/P_6 \cong \mathbb{OP}^2$, 
the result follows from the standard facts on the geometric Freudenthal-Tits square summarized in Section 2.1. 
\end{proof}

\begin{corollary} \label{VMRT}
%In the setting of Theorem \ref{Main theorem}, 
Let $\pi \colon \mathcal X \rightarrow \Delta$ be a smooth projective morphism from a complex manifold $\mathcal X$
to the unit disc $\Delta\subset\mathbb C$. 
Suppose for any $t\in \Delta \backslash \{0\}$,
the fiber $\mathcal X_t=\pi^{-1}(t)$ is biholomorphic to the smooth projective symmetric variety $S$.
Then the VMRT of the central fiber $\mathcal X_0$ at a general point $x$ is projectively equivalent to 
$\Gr_{\omega}(2, 6) \subset \mathbb P^{13}$ or 
$\mathbb{OP}^2_0 \subset \mathbb P^{25}$, respectively. 
\end{corollary}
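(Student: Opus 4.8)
The plan is to propagate the VMRT of the general fibers to the central fiber by deforming minimal rational curves, and then to invoke the projective rigidity of the homogeneous models $Z:=\Gr_{\omega}(2,6)\subset\mathbb P^{13}$ and $Z:=\mathbb{OP}^2_0\subset\mathbb P^{25}$ provided by Proposition \ref{VMRT of S}.

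First I would record that $\mathcal X_0$ is again a Fano manifold of Picard number one: Fano-ness is an open property in smooth projective families (Kollár--Miyaoka--Mori), and since $\mathcal X_0$ is Fano one has $\Pic(\mathcal X_0)=H^2(\mathcal X_0,\mathbb Z)$, whose rank is a topological invariant of the family and therefore equals that of $\mathcal X_t\cong S$, namely one. Since $\mathcal X_0$ is Fano of Picard number one it carries a minimal rational component $\mathcal K_0$ whose general members are free, say of anticanonical degree $d_0$. A free rational curve $f\colon\mathbb P^1\to\mathcal X_0$ has unobstructed deformations in the total space $\mathcal X$ which dominate $\Delta$: indeed $T_{\mathcal X/\Delta}$ restricts along $f$ to the nonnegative bundle $f^*T_{\mathcal X_0}$ and the normal bundle of $\mathcal X_0$ in $\mathcal X$ is trivial, so $H^1(\mathbb P^1,f^*T_{\mathcal X})=0$ and the natural map to $\Delta$ is a submersion at $[f]$. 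Hence such a curve smooths to the nearby fibers $\mathcal X_t\cong S$, which forces $d_0$ to be at least the minimal anticanonical degree $d$ of a free rational curve on $S$; on the other hand a minimal rational curve of $S$ specializes into $\mathcal X_0$, so $d_0=d$. Consequently, for a general point $x\in\mathcal X_0$, the VMRT $\mathcal C_x(\mathcal X_0)$ is irreducible, nondegenerate in $\mathbb P(T_x\mathcal X_0)$, and of dimension $d-2=\dim Z$.

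Next I would organise the above into the relative VMRT $\mathcal C\subset\mathbb P(T_{\mathcal X/\Delta})$ over $\Delta$. By Proposition \ref{VMRT of S} its fiber over a general point of $\mathcal X_t$ with $t\neq0$ is projectively equivalent to $Z$; passing to a suitable (multi)section of $\mathcal X\to\Delta$ through a general point of $\mathcal X_0$ and using semicontinuity, the variety $\mathcal C_x(\mathcal X_0)$ is exhibited as a limit in the Hilbert (or Chow) scheme of $\mathbb P^{n-1}$ of projective translates of $Z$. The corollary then follows from the \emph{projective rigidity} of $Z$: any such degeneration of $\PGL$-translates of $Z$ is again projectively equivalent to $Z$. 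For $Z=\Gr_{\omega}(2,6)$ and $Z=\mathbb{OP}^2_0$ this rigidity is available from the classification of smooth nondegenerate projective varieties carrying a nonzero prolongation and the accompanying rigidity statements (Hwang--Mok, Fu--Hwang), which is also the mechanism underlying the local rigidity of the $A_2$-type symmetric varieties recalled in the introduction.

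The hard part is this last step. Projective rigidity under degeneration does not follow formally from the local rigidity $H^1(Z,T_Z)=0$ of the homogeneous model, since flat limits of $\PGL$-translates of a locally rigid variety may still degenerate (a smooth conic is the classical example); one genuinely needs the special projective geometry of $\Gr_{\omega}(2,6)$ and $\mathbb{OP}^2_0$ — that they are the VMRTs of rational homogeneous spaces associated to long simple roots and admit no exotic small deformations as subvarieties of $\mathbb P^N$. A subsidiary technical point is the cycle bookkeeping in the middle step: one must rule out that a component of the limit of minimal rational curves on $\mathcal X_0$ has anticanonical degree strictly smaller than $d$, and it is precisely the freeness-and-smoothing argument of the first paragraph that handles this.
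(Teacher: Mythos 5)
There is a genuine gap, and it sits exactly where you say the ``hard part'' is: the final identification step is not proved, and the statement you invoke is false as formulated. A flat limit in the Hilbert or Chow scheme of $\PGL(V)$-translates of a fixed positive-dimensional subvariety $Z\subset\mathbb P V$ is in general \emph{not} again a translate of $Z$: degenerating along a one-parameter subgroup of $\PGL(V)$ typically produces cones, reducible or nonreduced limits, for $\Gr_{\omega}(2,6)$ and $\mathbb{OP}^2_0$ just as for your conic example, and neither the local rigidity of $Z$ nor the Fu--Hwang classification of varieties with nonzero prolongation (\cite{FuHw}, \cite{FuHw2018}) supplies a ``no exotic degenerations of translates'' theorem. (In the paper that classification is used only later, in Section 3, for the vanishing of the first prolongation.) So the mechanism that is supposed to force $\mathcal C_x(\mathcal X_0)$ to be a translate of $Z$ is missing. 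The paper's proof avoids degenerating the embedded VMRT altogether: it deforms the \emph{parameter space} instead. The normalized Chow spaces $\mathcal K_{\sigma(t)}$ of minimal rational curves through a section $\sigma$ form a smooth projective family over $\Delta$ (by the argument of Proposition 4 of \cite{HwM98}), and since the general fiber $\mathcal K_s(S)\cong \mathcal C_s(S)$ is a rational homogeneous space of Picard number one ($C_3/P_2$, resp.\ $F_4/P_4$, by Proposition \ref{VMRT of S}), the global rigidity theorem of \cite{HM05} identifies $\mathcal K_{\sigma(0)}$ with the same homogeneous model; the projective statement then reduces to showing that the image of the tangent map at $x$ is nondegenerate in $\mathbb P(T_x\mathcal X_0)$.

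That nondegeneracy is the second gap: you assert that $\mathcal C_x(\mathcal X_0)$ is irreducible and nondegenerate as a ``consequence'' of the degree bookkeeping $d_0=d$, but neither follows from it, and your own limit argument needs both (a degenerate or partial limit could not be matched with $Z$, and the Hilbert-scheme limit of the nearby VMRTs need not exhaust $\mathcal C_x(\mathcal X_0)$ if $\mathcal K_x(\mathcal X_0)$ had extra components). The paper proves nondegeneracy by a separate argument: since $\dim\mathcal C_x=7>\tfrac12\dim\mathcal X_0-1=6$ (resp.\ $15>12$), a proper linear span of the VMRTs would define a distribution which is integrable by Zak's theorem on tangencies, and this contradicts $b_2(\mathcal X_0)=1$ by Proposition 13 of \cite{HwM98}. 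Nothing in your first paragraph substitutes for this step. A minor further point: your justification that $\mathcal X_0$ is Fano is circular (openness of Fano-ness goes from the central fiber to nearby fibers, not the other way); the correct argument is that $-K_{\mathcal X_0}$ is a nef and numerically nontrivial class on a projective manifold of Picard number one, hence ample.
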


\begin{proof}
Choose a section $\sigma \colon \Delta \rightarrow \mathcal X$ of $\pi$ such that $\sigma(0)=x$ and $\sigma(t)$ passes through a general point in $S$ for $t \neq 0$. 
Let $\mathcal K_{\sigma(t)}$ be the normalized Chow space of minimal rational curves passing through $\sigma(t)$ in $\mathcal X_t$. 
Then the canonical map $\mu \colon \mathcal K_{\sigma} \rightarrow \Delta$ given by the family $\{ \mathcal K_{\sigma(t)} \}$ 
is smooth and projective by the same proof as that of Proposition 4 of \cite{HwM98}. 
The main theorem of \cite {HM05} says that 
$\mathcal K_{\sigma(t)}$ is isomorphic to $\mathcal K_{s}(S)$ for all $t \in \Delta$ 
%the VMRT $\mathcal C_x (\mathcal X_0)$ is isomorphic to $\mathcal C_s (\mathcal S)$
because %the VMRT of a general fiber $S$ 
$\mathcal K_{s}(S) \cong \mathcal C_{s}(S)$ is a rational homogeneous manifold with Picard number one by Proposition \ref{VMRT of S}. 
Thus it suffices to show that the image of the tangent map for the central fiber is %linearly 
nondegenerate in $\mathbb P(T_x \mathcal X_0)$, 
that is, the image is not contained in any hyperplane of the projective space $\mathbb P(T_x \mathcal X_0)$. 

Since $\dim \Gr_{\omega}(2, 6) = 7 > \frac{1}{2}\dim (\SL(6,\mathbb C)/ \Sp(6, \mathbb C) ) - 1 = 6$ 
and $\dim \mathbb{OP}^2_0 = 15 > %\displaystyle
\frac{1}{2}\dim (E_6/ F_4) - 1 = 12$, 
the distribution spanned by the VMRTs is integrable by Zak's theorem on tangencies 
(\cite{Zak} and Proposition 1.3.2 of %Hwang-Mok 
\cite{HM99}).
%(Proposition 1.3.2 of Hwang-Mok \cite{HM99} and Zak \cite{Zak}) 
%and the variety of tangential lines to $Z$ is nondegenerate in $\mathbb P (\wedge^2 (T_x X))$. 
Since the second Betti number $b_2(\mathcal X_0)=1$, 
the VMRT $\mathcal C_x (\mathcal X_0)$ at a general point $x$ is %linearly 
nondegenerate in $\mathbb P(T_x \mathcal X_0)$ by Proposition 13 of \cite{HwM98}. 
% Then the argument in Section 5 of %Hwang-Mok 
% \cite{HwM98} shows 
%% that the VMRT $\mathcal C_x(X) \subset \mathbb P (T_x X)$ at a general point $x \in X$ is isomorphic to $Z\subset \mathbb P (T_o S)$.
% the projective equivalence of the VMRT at a general point $x$ of the central fiber $\mathcal X_0$. 
%% So this projective equivalence is a direct consequence of Zak's theorem on tangencies
%% (\cite{Zak} and Proposition 1.3.2 of %Hwang-Mok \cite{HM99}).
\end{proof}

\section{Prolongations %of infinitesimal automorphisms 
of cone structure defined by VMRT and proof of Theorem \ref{Main theorem}}

\subsection{Prolongations of a linear Lie algebra}
Let $M$ be a differentiable manifold. 
Fix a vector space $V$ with $\dim V = \dim M$.
A \emph{frame} at $x \in M$ is a linear isomorphism $\sigma \colon V \rightarrow T_xM$. 
A \emph{frame bundle} $\mathcal F(M)$ on $M$ is the set of all frames $ \mathcal F_x(M) := \mbox{Isom}(V, T_xM) $  at every point $x \in M$. 
Then $\mathcal F(M)$ is a principal $\GL(V)$-bundle on $M$.
For a closed Lie subgroup $G\subset \GL(V)$, 
a \emph{(geometric) $G$-structure} on $M$ is defined as 
a $G$-subbundle $\mathcal G \subset \mathcal F(M)$ of the frame bundle.
The subbundle $V \times G$ of the frame bundle $\mathcal F(V) = V \times \GL(V)$ is called the \emph{flat $G$-structure} on $V$ 
and the $G$-structure on $M$ is \emph{locally flat} if it is locally equivalent to the flat $G$-structure on $V$. % in the following sense.
%\begin{definition} 
% Let $M$ and $M'$ be differentiable manifolds.A $G$-structure $\mathcal G \subset \mathcal F(M)$ on $M$ and a $G$-structure $\mathcal G' \subset \mathcal F(M')$ on $M'$ are \emph{equivalent} if there exists a diffeomorphism $\varphi \colon M \rightarrow M'$ % such that the bundle map $\varphi_* \colon  \mathcal F(M) \rightarrow  \mathcal F(M')$ induced by the differential of $\varphi$ sends $\mathcal G$ isomorphically $\mathcal G'$.
%\end{definition}
The \emph{(algebraic) prolongations $\mathfrak g^{(k)}$} of 
a linear Lie algebra $\mathfrak g \subset \mathfrak{gl}(V)$ originate from the
higher order derivatives of the infinitesimal automorphisms of the flat $G$-structure on $V$.
% Taking a global coordinate system $(x_1, \cdots, x_n)$ given by a basis of $V$, 
% we can interpret every vector field $\xi = \sum_{i=1}^n \xi_i \frac{\partial}{\partial x_i}$ on $V$ as a $V$-valued function $F_{\xi}$ on $V$.
% Then the derivative $dF_{\xi}$ of $F_{\xi}$ is expressed as the coefficient matrix under the identification $V\cong T_v V$ for $v\in V$, 
% which is the $\mathfrak{gl}(V)$-valued linear map on $V$. If the lift of a vector field $\xi$ on $V$ to the frame bundle $\mathcal F(V)$ 
% is tangent to the $G$-subbundle, then the coefficient matrix of the derivative $dF_{\xi}$ has $\mathfrak g$-value. 
% Likewise, the coefficient matrix of the second derivative of $F_{\xi}$ is a $\mathfrak{gl}(V)\otimes V^*$-valued linear map on $V$, takes values in $\Sym^2 V^* \otimes V$ and in $V^* \otimes \mathfrak g$.

\begin{definition}
Let $V$ be a complex vector space and
$\mathfrak g \subset \mathfrak{gl}(V)$ a linear Lie algebra.
For an integer $k\geq 0$, the space $\mathfrak g^{(k)}$,
called the \emph{k-th prolongation} of $\mathfrak g$, is
the vector space of symmetric multi-linear homomorphisms
$A \colon \Sym^{k+1}V \to V$ such that
for any fixed vectors $v_1, \cdots, v_k \in V$,
the endomorphism
$$v\in V \mapsto %A_{v_1,\cdots, v_k,v}:=
A(v_1,\cdots, v_k,v)\in V$$
belongs to the Lie algebra $\mathfrak g$.
That is, $\mathfrak g^{(k)}=\Hom(\Sym^{k+1} V, V)\cap\Hom(\Sym^k V, \mathfrak g)$.
\end{definition}

We are interested in the case where a Lie algebra $\mathfrak g$ is relevant to geometric contexts, %arises from the geometric situations, 
in particular, the Lie algebra of infinitesimal linear automorphisms of the affine cone of an irreducible projective subvariety.

\begin{definition}
Let $Z \subset \mathbb P V$ be an irreducible % smooth 
projective variety.
The {\it projective automorphism group} of
$Z$ is $\Aut(Z)=\{ g\in \PGL(V) %\, |\, 
: g(Z)=Z \}$ and its Lie algebra is denoted by $\mathfrak{aut}(Z)$.
Denote by $\widehat Z \subset V$ the affine cone of $Z$ and
by $T_{\alpha}\widehat{Z}\subset V$
the affine tangent space at a smooth point $\alpha\in\widehat{Z}$.
The {\it Lie algebra of infinitesimal linear automorphisms} of $\widehat{Z}$
is defined by
\begin{align*}
\mathfrak{aut}(\widehat{Z})&
=\{A\in \mathfrak{gl}(V) %\, |\, 
: \mbox{exp}(t A)(\widehat{Z})\subset \widehat{Z}, \, t\in \mathbb C \}, 
\end{align*}
where $\mbox{exp}(t A)$ denotes the one-parameter group of linear automorphisms of $V$.
Its $k$-th prolongation
$\mathfrak{aut}(\widehat{Z})^{(k)}$ will be called the 
\emph{$k$-th prolongation of $Z \subset \mathbb P V$}.
\end{definition}

In \cite{HM05}, Hwang and Mok studied the prolongations $\mathfrak{aut}(\widehat{Z})^{(k)}$ 
of a projective variety $Z \subset \mathbb P V$
using the projective geometry of $Z$ 
and the deformation theory of rational curves on $Z$. 
In particular, the vanishing of the second prolongation $\mathfrak{aut}(\widehat{Z})^{(2)}$
for an irreducible smooth nondegenerate projective variety $Z$ embedded in the projective space $\mathbb P V$ was proven. 
%which is Theorem 1.1.2 of %Hwang-Mok
%\cite{HM05}.

\begin{proposition}[Theorem 1.1.2 of \cite{HM05}]
\label{second prolongation}
Let $Z \subset \mathbb P V$ be
an irreducible smooth nondegenerate projective variety.
If $Z \neq \mathbb P V$, then the second prolongation of $Z$ vanishes, that is,
$\mathfrak{aut}(\widehat{Z})^{(2)} = 0$.
\end{proposition}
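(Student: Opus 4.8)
The plan is to reduce the vanishing of $\mathfrak{aut}(\widehat Z)^{(2)}$ to a statement about minimal rational curves on $Z$ and the behavior of prolongations along them. Since $Z \subset \mathbb P V$ is smooth, irreducible and nondegenerate with $Z \ne \mathbb P V$, the variety $Z$ is covered by lines (indeed any smooth nondegenerate variety cut out by quadrics, or more generally one can pass to the minimal rational curves for the restriction of $\mathcal O(1)$; the key input is that lines on $Z$ give a well-behaved family whose VMRT is again nondegenerate when $Z$ itself is nondegenerate). First I would fix an element $A \in \mathfrak{aut}(\widehat Z)^{(2)}$, i.e.\ a symmetric trilinear map $A \colon \Sym^3 V \to V$ such that $A(v_1, v_2, \cdot) \in \mathfrak{aut}(\widehat Z)$ for all $v_1, v_2$, and show $A = 0$.

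The heart of the argument is a restriction lemma: if $\ell \subset Z$ is a line with $\widehat\ell \cong \mathbb C^2 \subset V$ the corresponding plane, then the data of $A$ restricted along $\widehat\ell$ produces a section of a suitable bundle on $\ell \cong \mathbb P^1$, and one shows this section must vanish for degree reasons. Concretely, for $\alpha \in \widehat\ell$ a smooth point, the condition that $A(\alpha, \alpha, \cdot)$ lies in $\mathfrak{aut}(\widehat Z)$ forces $A(\alpha,\alpha,\alpha) \in \widehat\ell$ (it preserves the tangent cone and in particular the line), so $A|_{\Sym^3 \widehat\ell}$ is a degree-$3$ symmetric map $\Sym^3(\mathbb C^2) \to \mathbb C^2$, i.e.\ a pair of binary cubics — but the first-prolongation constraint (that $A(\alpha,\alpha,\cdot)$ is not just any endomorphism but one tangent to $\widehat Z$, hence in particular scalar on $\widehat\ell$ along the Euler direction) cuts this down and, combined with varying the line, kills it. The cleanest way to organize this: show that $A$ restricted to the affine cone $\widehat Z$ vanishes to high order, using that $\mathfrak{aut}(\widehat Z)$ acts on $V$ and the second prolongation of $\widehat\ell \cong \mathbb P^1 \subset \mathbb P^2$ — a conic's worth of data — already vanishes by a direct $\SL_2$ representation-theoretic computation, then propagate from lines through a general point to all of $T_\alpha \widehat Z$ and finally to all of $V$ by nondegeneracy.

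The main obstacle I expect is the propagation step: controlling $A$ at a general point is comparatively easy, but upgrading from "$A$ vanishes when restricted to each line through a general point" to "$A = 0$ on $\Sym^3 V$" requires that the affine tangent spaces $T_\alpha \widehat Z$, as $\alpha$ ranges over $\widehat Z$, together with the relevant derivatives span enough of $\Sym^3 V$; this is exactly where nondegeneracy of $Z$ (and not merely of its VMRT) must be used, and where one invokes the surjectivity of the second-order Gauss-type maps. I would handle this by an infinitesimal argument: differentiate the relation $A(\alpha,\alpha,\cdot) \in \mathfrak{aut}(\widehat Z)$ along curves in $\widehat Z$ to obtain $A(\alpha, \beta, \cdot) \in \mathfrak{aut}(\widehat Z)$ for $\beta \in T_\alpha \widehat Z$, then iterate once more, and finally use that the second osculating spaces of a nondegenerate variety, together with $Z$ being nondegenerate, fill up $V$ — so that a symmetric trilinear form vanishing on all these osculating data must be identically zero. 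Throughout, the degree bookkeeping is the routine part; the conceptual content is that a nontrivial $A$ would force $Z$ to be (a linear section related to) $\mathbb P V$ itself, contradicting $Z \ne \mathbb P V$.
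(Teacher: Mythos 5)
The paper does not prove this statement at all: it is quoted verbatim as Theorem 1.1.2 of \cite{HM05}, so what you are proposing is a reproof of that theorem. Your opening reduction is where the proposal breaks. You assert that a smooth irreducible nondegenerate $Z \varsubsetneqq \mathbb P V$ is covered by lines (or at least admits minimal rational curves). That is false in this generality: a smooth plane cubic, a smooth hypersurface of large degree, or a nondegenerately embedded abelian variety is smooth, irreducible, nondegenerate and different from $\mathbb P V$, yet carries no rational curves whatsoever, so there is no family of lines and no minimal rational component along which to restrict $A$. The theorem is asserted (and later used, e.g.\ in the classification of \cite{FuHw}) precisely in this generality. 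In the Hwang--Mok framework the implication runs the other way: the existence of lines on $Z$ is a \emph{consequence} of a nonzero prolongation, not a hypothesis, and the proof of Theorem 1.1.2 in \cite{HM05} is a pointwise projective-differential-geometric argument at a general smooth point of $\widehat Z$ --- tangency conditions on affine tangent spaces, their derivatives, and the projective second fundamental form, with smoothness and $Z \neq \mathbb P V$ entering through Zak-type finiteness of the Gauss map (triviality of the kernel of the second fundamental form at a general point) --- with no covering family of rational curves anywhere.

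Even if one grants a covering family of lines, the two steps that would carry the actual content are not correct as stated. First, from $B = A(\alpha,\alpha,\cdot) \in \mathfrak{aut}(\widehat Z)$ and $\alpha \in \widehat Z$ smooth you may conclude only that $B(\alpha) = A(\alpha,\alpha,\alpha) \in T_{\alpha}\widehat Z$, the affine tangent space of the cone; preserving $\widehat Z$ does not force preservation of the individual plane $\widehat{\ell}$, so the claim $A(\alpha,\alpha,\alpha) \in \widehat{\ell}$ and the ensuing binary-cubic bookkeeping on $\Sym^3 \widehat{\ell}$ are unjustified. Second, your propagation step is vacuous: by the definition of the prolongation, $A(v_1,v_2,\cdot) \in \mathfrak{aut}(\widehat Z)$ already holds for \emph{all} $v_1, v_2 \in V$, so ``differentiating to obtain $A(\alpha,\beta,\cdot) \in \mathfrak{aut}(\widehat Z)$ for $\beta \in T_\alpha \widehat Z$'' yields no new information; the decisive input must be how the evaluation conditions $B(\alpha) \in T_\alpha \widehat Z$ interact, upon differentiation along $\widehat Z$, with the second fundamental form, and why its kernel vanishes at a general point of a smooth $Z \neq \mathbb P V$ --- exactly the ingredients your sketch leaves out. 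As written, the argument neither starts (no lines in general) nor closes (no mechanism forcing $A = 0$), so for the purposes of this paper the statement should simply be cited from \cite{HM05}, or reproved along the tangency/second-fundamental-form route just described.
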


From the definition of prolongations, it is immediate that
$\mathfrak g^{(k)}=0$ for some $k\geq 0$ implies $\mathfrak g^{(k+1)}=0$.
Thus if $Z \varsubsetneqq \mathbb P V$ is
an irreducible smooth nondegenerate projective variety,
then $\mathfrak{aut}(\widehat{Z})^{(k)} = 0$ for $k\geq 2$.

\subsection{Infinitesimal automorphisms of cone structures}

%\begin{definition}
A \emph{cone structure} $\mathcal C$ on a complex manifold $M$ is
a closed analytic subvariety $\mathcal C \subset \mathbb P(T_M)$
such that the natural projection $\pi \colon \mathcal C \rightarrow M$
is proper, flat and surjective with connected fibers. We denote the fiber $\pi^{-1}(x)$ by $\mathcal C_x$ for a point $x \in M$. %\end{definition}
A germ of holomorphic vector field $v$ at $x \in M$
is said to \emph{preserve the cone structure}
if the local one-parameter family of biholomorphisms integrating $v$
lifts to local biholomorphisms of $\mathbb P(T_M)$ preserving $\mathcal C$.

\begin{definition}
Let $\mathcal C$ be a cone structure on a complex manifold $M$. 
The \emph{Lie algebra $\mathfrak{aut}(\mathcal C, x)$ of
infinitesimal automorphisms of the cone structure $\mathcal C$ at $x \in M$}
is the set of all germs of holomorphic vector fields
preserving the cone structure $\mathcal C$ at $x$.
\end{definition}

The Lie algebra $\mathfrak{aut}(\mathcal C, x)$ is naturally
filtered by the vanishing order of vector fields at $x$.
More precisely, for each integer $k\geq 0$, let $\mathfrak{aut}(\mathcal C, x)_k$ be
the subalgebra of $\mathfrak{aut}(\mathcal C, x)$
consisting of vector fields that vanish at $x$ to the order $\geq k+1$. 
The Lie bracket gives the structure of filtration
$$\mathfrak{aut}(\mathcal C, x)\supset
\mathfrak{aut}(\mathcal C, x)_0 \supset
\mathfrak{aut}(\mathcal C, x)_1 \supset
\mathfrak{aut}(\mathcal C, x)_2\supset \cdots.$$ 
%{\color{red}
%Note that the graded pieces under the natural filtration of
%the Lie algebra of infinitesimal symmetries of $G$-structure on a complex manifold $M$
%are contained in the prolongations $\mathfrak g^{(k)}$ of the Lie algebra $\mathfrak g \subset \mathfrak{gl}(V)$ of $G$ with $\dim V=\dim M$
%and equal to the prolongations when the $G$-structure on $M$ is flat.}
Let $\xi$ be a germ of holomorphic vector field on $M$
vanishing to order $\geq k + 1$ at $x$.
Then its $(k+1)$-jet $J^{k+1}_x(\xi)$ %of $\xi$
defines an element of $\Sym^{k+1} (T_x^* M) \otimes T_x M$.
Because $J^{k+1}_x(\zeta)=0$ for a vector field $\zeta$
vanishing to order $\geq k + 2$ at $x$,
this defines the inclusion
$\mathfrak{aut}(\mathcal C, x)_k/\mathfrak{aut}(\mathcal C, x)_{k+1}
\subset \Hom(\Sym^{k+1} (T_x M), T_x M)$.
The following result follows from Proposition 1.2.1 of %Hwang-Mok
\cite{HM05}.

\begin{proposition}%[Proposition 1.2.1 of %Hwang-Mok \cite{HM05}]
\label{aut:inequality}
Let $\mathcal C \subset \mathbb P(T_M)$ be
a cone structure on a complex manifold $M$
and $x \in M$ a point.
For each $k \geq 0$, if the quotient space
$\mathfrak{aut}(\mathcal C, x)_k/\mathfrak{aut}(\mathcal C, x)_{k+1}$
is regarded as a subspace of $\Hom(\Sym^{k+1} (T_x M), T_x M)$,
%by taking the leading terms of the Taylor expansion of the vector fields at $x \in M$,
then we have the inclusion
$$\mathfrak{aut}(\mathcal C, x)_k/\mathfrak{aut}(\mathcal C, x)_{k+1} \subset
\mathfrak{aut}(\widehat{\mathcal C_x})^{(k)}.$$
%If the cone structure $\mathcal C$ is isotrivial and locally flat,
%then the equality in the above inclusion holds for all $k$.
\end{proposition}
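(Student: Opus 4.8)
The plan is to reduce the statement to a local computation: it suffices to show that for every $\xi \in \mathfrak{aut}(\mathcal C, x)_k$ its $(k+1)$-jet $J^{k+1}_x(\xi)$, regarded via the inclusion of the statement as an element of $\Hom(\Sym^{k+1}(T_x M), T_x M)$, belongs to $\mathfrak{aut}(\widehat{\mathcal C_x})^{(k)}$; this is the content of Proposition 1.2.1 of \cite{HM05}. Choose local holomorphic coordinates $y = (y^1, \cdots, y^n)$ on $M$ centered at $x$ and use the induced trivialization $T_M|_U \cong U \times V$ over a neighborhood $U$ of $x$, where $V = T_x M$ carries linear coordinates $v = (v^1, \cdots, v^n)$; then $\widehat{\mathcal C}|_U$ is a closed subvariety of $U \times V$ whose fiber over $y$ is the affine cone $\widehat{\mathcal C_y} \subset V$. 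A germ of holomorphic vector field $\xi = \sum_i a^i(y)\,\partial/\partial y^i$ preserves the cone structure if and only if its complete (tangent) lift
$$\xi^c = \sum_i a^i(y)\,\frac{\partial}{\partial y^i} + \sum_{i,j}\frac{\partial a^i}{\partial y^j}(y)\,v^j\,\frac{\partial}{\partial v^i},$$
which generates on $T_M$ the flow $d\phi_s$ induced by the flow $\phi_s$ of $\xi$, is tangent to $\widehat{\mathcal C}$ at each of its smooth points. Now suppose $\xi \in \mathfrak{aut}(\mathcal C, x)_k$, so each $a^i$ vanishes to order $\geq k+1$ at $x$; write $a^i(y) = p^i(y) + O(|y|^{k+2})$ with $p = (p^i)$ homogeneous of degree $k+1$, so that the $(k+1)$-jet $A \in \Hom(\Sym^{k+1} V, V)$ of $\xi$ at $x$ is the symmetric multilinear map with $A(y, \cdots, y) = p(y)$, and $\partial p^i/\partial y^j(u) = (k+1)\,A^i(u, \cdots, u, e_j)$ with $u$ appearing $k$ times.

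Fix $u \in V$ and a point $w$ in the smooth locus of the fiber $\widehat{\mathcal C_x}$. Since $\pi \colon \widehat{\mathcal C} \to M$ is flat and $\widehat{\mathcal C_x}$ is smooth at $w$, the total space $\widehat{\mathcal C}$ is smooth at $(x, w)$, the differential of $\pi$ there is surjective, and its vertical kernel is exactly $T_w\widehat{\mathcal C_x}$; in particular there is a holomorphic arc $v(t) = w + tw_1 + \cdots$ with $(tu, v(t)) \in \widehat{\mathcal C}$ for small $t$. Evaluating $\xi^c$ along this arc in the chosen trivialization gives a holomorphic $V \oplus V$-valued function of $t$ whose base part is $a(tu) = t^{k+1} p(u) + O(t^{k+2})$ and whose fiber part is $\sum_j \big(\partial a^i/\partial y^j\big)(tu)\, v^j(t) = t^k\,(k+1)\,A(u, \cdots, u, w) + O(t^{k+1})$; hence $\xi^c(tu, v(t))$ is divisible by $t^k$, and
$$\zeta(t) := t^{-k}\,\xi^c\big(tu, v(t)\big) = \big(\,t\,p(u) + O(t^2),\ (k+1)\,A(u, \cdots, u, w) + O(t)\,\big).$$
For $t \neq 0$ the vector $\zeta(t)$ is a scalar multiple of a vector tangent to $\widehat{\mathcal C}$ at $(tu, v(t))$, hence lies in $T_{(tu, v(t))}\widehat{\mathcal C}$; since near the smooth point $(x, w)$ the tangent spaces of $\widehat{\mathcal C}$ form a holomorphic subbundle, letting $t \to 0$ yields $\zeta(0) = \big(0,\ (k+1)\,A(u, \cdots, u, w)\big) \in T_{(x,w)}\widehat{\mathcal C}$, and being vertical it lies in $T_w\widehat{\mathcal C_x}$. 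Thus $A(u, \cdots, u, w) \in T_w\widehat{\mathcal C_x}$.

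Letting $w$ range over the smooth locus of $\widehat{\mathcal C_x}$, this says that the endomorphism $v \mapsto A(u, \cdots, u, v)$ of $V$ is tangent to $\widehat{\mathcal C_x}$ along its smooth locus, hence belongs to $\mathfrak{aut}(\widehat{\mathcal C_x})$. Since $u \mapsto A(u, \cdots, u, \cdot)$ is a degree-$k$ homogeneous polynomial map from $V$ to $\mathfrak{gl}(V)$ and $\mathfrak{aut}(\widehat{\mathcal C_x})$ is a linear subspace, polarization in $u$ shows that $v \mapsto A(v_1, \cdots, v_k, v)$ lies in $\mathfrak{aut}(\widehat{\mathcal C_x})$ for all $v_1, \cdots, v_k \in V$; that is, $A \in \Hom(\Sym^k V, \mathfrak{aut}(\widehat{\mathcal C_x}))$, so $A \in \mathfrak{aut}(\widehat{\mathcal C_x})^{(k)}$, which is the claimed inclusion. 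The jet bookkeeping and the polarization are routine; the point that needs care is the translation "$\xi$ preserves $\mathcal C$ $\Leftrightarrow$ $\xi^c$ is tangent to $\widehat{\mathcal C}$" together with the smoothness of the total space $\widehat{\mathcal C}$ at $(x,w)$ and the identification of the vertical part of its tangent space with $T_w\widehat{\mathcal C_x}$ — all of which rest on the flatness built into the notion of a cone structure. This is precisely the content of Proposition 1.2.1 of \cite{HM05}.
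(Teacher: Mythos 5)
Your argument is correct, and it follows essentially the same route as the paper, which proves this proposition simply by invoking Proposition 1.2.1 of \cite{HM05} after observing that the $(k+1)$-jet embeds $\mathfrak{aut}(\mathcal C,x)_k/\mathfrak{aut}(\mathcal C,x)_{k+1}$ into $\Hom(\Sym^{k+1}(T_xM),T_xM)$. What you have written is in effect a faithful reconstruction of the proof of that cited result (tangency of the lifted field $\xi^c$ to $\widehat{\mathcal C}$, the arc through a smooth point of the fiber using flatness, the limit $t\to 0$, and polarization), so it supplies the details the paper leaves to \cite{HM05}.
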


From Proposition \ref{aut:inequality}, we have the natural inequalities 
\begin{eqnarray*}
\dim \mathfrak{aut}(\mathcal C, x)_0
&\leq& \dim \mathfrak{aut}(\widehat{\mathcal C_x}) + \dim \mathfrak{aut}(\mathcal C, x)_1 \\
&\leq& \dim \mathfrak{aut}(\widehat{\mathcal C_x}) + \dim
\mathfrak{aut}(\widehat{\mathcal C_x})^{(1)} + \dim
\mathfrak{aut}(\mathcal C, x)_2\leq \cdots .
\end{eqnarray*}
Because the codimension of $\mathfrak{aut}(\mathcal C, x)_0$ in
$\mathfrak{aut}(\mathcal C, x)$ is at most $\dim M$,
we obtain the following direct consequence (see Proposition 5.10 of \cite{FuHw}).

\begin{corollary}
\label{aut} %Given an isotrivial
Let $\mathcal C \subset \mathbb P(T_M)$ be a cone structure on a complex manifold $M$
and $x \in M$.
%Assume that 
If $\mathfrak{aut}(\widehat{\mathcal C_x})^{(k+1)} = 0$
for some $k \geq 0$,
then $$\dim \mathfrak{aut}(\mathcal C, x) \leq
\dim M +\dim \mathfrak{aut}(\widehat{\mathcal C_x}) +\dim
\mathfrak{aut}(\widehat{\mathcal C_x})^{(1)}+ \cdots + \dim
\mathfrak{aut}(\widehat{\mathcal C_x})^{(k)}.$$
\end{corollary}

\subsection{Cone structure defined by VMRT}

Let $Z \subset \mathbb PV$ be a (fixed) projective variety with $\dim V = \dim M$. 
A cone structure $\mathcal C \subset \mathbb P(T_M)$ is \emph{$Z$-isotrivial} 
if for a general point $x \in M$, 
the fiber $\mathcal C_x \subset \mathbb P(T_x M)$ is isomorphic to 
$Z \subset \mathbb PV$ as a projective variety, 
i.e., there exists a linear isomorphism $V \rightarrow T_xM$ sending $Z$ to $\mathcal C_x$.

%\begin{example}
%\begin{enumerate}
%\item Let $Z=\mathbb P^k \subset \mathbb PV$ be a linear subspace of dimension $k \leq \dim V-1$. %Then the $Z$-isotrivial cone structure on $M$ is a Pfaffian system of rank $k+1$ on $M$ which means a %differential system (a system of differential forms) specified by 1-forms alone. 
% \item Let $Z=\mathbb Q^{n-2} \subset \mathbb PV = \mathbb P^{n-1}$ be a nonsingular quadric hypersurface. Then the $Z$-isotrivial cone structure on $M$  is a conformal structure on $M$.
%\end{enumerate}
%\end{example}

%Let $M^o = \{ x \in M : (\mathcal C_x \subset \mathbb PT_x(M)) \simeq (Z \subset \mathbb PV) \mbox{ projectively equivalent} \}$. 
%Then, for a $Z$-isotrivial cone structure $\mathcal C$ on $M$, $M^o$ is a Zariski open dense subset. 

For the affine cone $\widehat{Z} \subset V$ of $Z$, 
let $G = \Aut(\widehat{Z}) = \{ g \in \GL(V) : g(\widehat{Z})=\widehat{Z} \}$ be the automorphism group of $\widehat{Z} \subset V$.
A $Z$-isotrivial cone structure $\mathcal C$ on $M$ induces 
the $G$-structure $\mathcal G$ of cone type of which a fiber at general point $x$ is 
$$\mathcal G_x = \{ \sigma \in \mbox{Isom}(V, T_x M) \colon \sigma(\widehat{Z}) = \widehat{\mathcal C_x} \}.$$
%\begin{definition}
An isotrivial cone structure $\mathcal C$ on $M$ is \emph{locally flat}
if its associated $G$-structure $\mathcal G$ is locally flat. 
%\end{definition}
%{\color{red}
We know that if $\mathcal C$ is a locally flat cone structure on $M$ with $\mathfrak{aut}(\widehat{\mathcal C_x}) = \mathfrak{g}$ 
and $k$ is a nonnegative integer such that $\mathfrak{aut}(\widehat{\mathcal C_x})^{(k+1)} = 0$, 
then $\mathfrak{aut}(\mathcal C, x)$ is isomorphic to the graded Lie algebra $V \oplus \mathfrak{g} \oplus \mathfrak{g}^{(1)} \oplus \cdots \oplus \mathfrak{g}^{(k)}$. 
Conversely, if the equality in Corollay \ref{aut} holds, then the cone structure $\mathcal C$ is locally flat by Corollary 5.13 of \cite{FuHw}.  
%}

Now, we are ready to prove Theorem \ref{Main theorem} by considering the cone structure defined by VMRT which is $Z$-isotrivial.

\subsection{Proof of Theorem \ref{Main theorem}}
% \noindent {\it Proof of Theorem \ref{Main theorem}.}
(i) If $S$ is the smooth equivariant completion with Picard number one of 
the symmetric homogeneous space $\SL(6,\mathbb C)/ \Sp(6, \mathbb C)$, 
then its automorphism group $\Aut(S)$ is generated by $\PSL(6, \mathbb C)$ 
and the involution $\theta$ with $\SL(6,\mathbb C)^{\theta}=\Sp(6, \mathbb C)$ by Proposition 3 of \cite{Ruzzi2010}.

From Corollary \ref{VMRT}, we can compute the Lie algebras of infinitesimal automorphisms of the affine cones of VMRTs: 
$\mathfrak{aut}(\widehat{\mathcal C_x(\mathcal X_0)}) = \mathfrak{aut}(\widehat{\mathcal C_s(S)})
\cong \mathfrak{sp}(6, \mathbb C) \oplus \mathbb C$. 
Since the variety $\mathcal C_s(S)$ of minimal rational tangents of $S$ 
is irreducible smooth nondegenerate and not linear,
$\mathfrak{aut}(\widehat{\mathcal C_s})^{(k)} = 0$ for $k\geq 2$ 
by Proposition \ref{second prolongation}.  
Furthermore, the classification of projective varieties with non-zero prolongation in \cite{FuHw2018}
implies that $\mathfrak{aut}(\widehat{\mathcal C_s})^{(k)}=0$ for all $k \geq 1$.
Thus, for the cone structure $\mathcal C$ on a fiber $\mathcal X_t$ given by its VMRT %and same for the central fiber, 
we have equalities
$$\dim \mathfrak{aut}(S) + 1 = \dim S+\dim \mathfrak{aut}(\widehat{\mathcal C_s}) = \dim \mathcal X_0 +\dim \mathfrak{aut}(\widehat{\mathcal C_x}).$$
%\begin{eqnarray*}
%\dim \mathfrak{aut}(S) + 1 
%&=& \dim S+\dim \mathfrak{aut}(\widehat{\mathcal C_s})
% +\dim \mathfrak{aut}(\widehat{\mathcal C_s})^{(1)} 
%\\
%&=& \dim \mathcal X_0 +\dim \mathfrak{aut}(\widehat{\mathcal C_x})
%+\dim \mathfrak{aut}(\widehat{\mathcal C_x})^{(1)}.
%\end{eqnarray*}

Because the Lie algebra $\mathfrak{aut}(\mathcal X_0)$ is isomorphic to
the space $H^0(\mathcal X_0, T_{\mathcal X_0})$ of global sections of
the tangent bundle $T_{\mathcal X_0}$,
we know $h^0(\mathcal X_0, T_{\mathcal X_0})=\dim \mathfrak{aut}(\mathcal X_0)$. 
%And $\mathfrak{aut}(\widehat{\mathcal C_x})^{(k)} = 0$ for $k\geq 2$ by Proposition \ref{second prolongation}
%since the variety $\mathcal C_s(S)$ of minimal rational tangents of $S$ is irreducible smooth nondegenerate and not linear. 
%Furthermore, $\mathfrak{aut}(\widehat{\mathcal C_x})^{(1)} = 0$ by the classification result of \cite{FuHw2018}.
Since the action of $\Aut(\mathcal X_0)$ preserves the VMRT-structure $\mathcal C$ on $\mathcal X_0$, 
we have an inclusion $\mathfrak{aut}(\mathcal X_0) \subset \mathfrak{aut}(\mathcal C, x)$. 
Hence, from Corollary \ref{aut} we have inequalities
\begin{eqnarray*}
h^0(\mathcal X_0, T_{\mathcal X_0})
= \dim \mathfrak{aut}(\mathcal X_0)  & \leq & \dim \mathfrak{aut}(\mathcal C, x) \\
%&\leq& \dim \mathfrak{aut}(\mathcal C, x) \\
&\leq& \dim \mathcal X_0
+\dim \mathfrak{aut}(\widehat{\mathcal C_x})
%+\dim \mathfrak{aut}(\widehat{\mathcal C_x})^{(1)}
\\
%&=& \dim S +\dim \mathfrak{aut}(\widehat{\mathcal C_s}) +\dim \mathfrak{aut}(\widehat{\mathcal C_s})^{(1)}\\
&=&\dim \mathfrak{aut}(S) +1 = h^0(S,T_S) +1.
\end{eqnarray*}
Now, recall the standard fact that the Euler-Poincar\'e characteristic 
of the holomorphic tangent bundle $T_X$ on a Fano manifold $X$ 
is given by $\chi(X,T_X)=h^0(X,T_X)-h^1(X,T_X).$ 
In fact, the Serre duality and Kodaira-Nakano vanishing theorem imply that 
$H^i(X, T_X)=H^{n-i}(X, T^*_X \otimes K_X)^* %=H^{n-i}(X, \Omega^1_X \otimes K_X)
=0$ for $i\geq 2$.
Since the Euler-Poincar\'e characteristic is constant in a smooth family 
and we already know $h^1(S, T_S)=0$ by Proposition 8.4 of \cite{FuHw2018-2}, % or Theorem 3.9 of \cite{BFM}, 
%From Euler-Poincar\'e characteristic,  
$h^1(\mathcal X_0, T_{\mathcal X_0}) = h^0(\mathcal X_0, T_{\mathcal X_0}) - h^0(S, T_S)\leq 1$.

%If the above equality would hold, then the isotrivial cone structure given by VMRT should be locally flat by Corollary 5.13 of \cite{FuHw}. 
%However, this contradicts the assumption since Theorem 1.2 of \cite{FuHw2018-2} says that 
%if the VMRT-structure is locally flat then the central fiber $\mathcal X_0$ is an equivariant compactification of the vector group $\mathbb C^n$. 
%Therefore we get $h^1(\mathcal X_0,T_{\mathcal X_0})%=h^0(\mathcal X_0,T_{\mathcal X_0})-h^0(S, T_S)=0$, and hence the central fiber
%$\mathcal X_0$ is also biholomorphic to the general fiber $S$.

%{\color{red}
Now, it suffices to consider two possible cases. 
Suppose that the above equality holds. 
Then we have $\dim \mathfrak{aut}(\mathcal C, x) = \dim \mathcal X_0 + \dim \mathfrak{aut}(\widehat{\mathcal C_x})$, 
%+\dim \mathfrak{aut}(\widehat{\mathcal C_x})^{(1)}$, where $\dim \mathfrak{aut}(\widehat{\mathcal C_x})^{(1)}=0$, 
which implies that the isotrivial cone structure $\mathcal C$ given by VMRT on the central fiber $\mathcal X_0$ should be locally flat by Corollary 5.13 of \cite{FuHw}. 
Thus $\mathcal X_0$ is an equivariant compactification of the vector group $\mathbb C^{14}$ from Theorem 1.2 of \cite{FuHw2018-2}. 
Next, if $h^1(\mathcal X_0,T_{\mathcal X_0})=0$, then the central fiber $\mathcal X_0$ is also biholomorphic to the general fiber $S$.  

\vskip 0.6em

(ii) If $S$ is the smooth equivariant completion with Picard number one of the symmetric homogeneous space $E_6/ F_4$,
then its automorphism group $\Aut(S)$ is generated by $E_6$ 
and the involution $\theta$ with $E_6^{\theta}=F_4$ by Proposition 3 of \cite{Ruzzi2010}. 
From Corollary \ref{VMRT}, 
$\mathfrak{aut}(\widehat{\mathcal C_x(\mathcal X_0)}) = \mathfrak{aut}(\widehat{\mathcal C_s(S)}) 
\cong \mathfrak{f_4} \oplus \mathbb C$. 
Because $\mathfrak{aut}(\widehat{\mathcal C_s})^{(k)}=0$ for all $k \geq 1$ by \cite{FuHw2018}, 
we also have the same equality as before:  
$$\dim \mathfrak{aut}(S) + 1 
= \dim S+\dim \mathfrak{aut}(\widehat{\mathcal C_s})
% +\dim \mathfrak{aut}(\widehat{\mathcal C_s})^{(1)}
= \dim \mathcal X_0 +\dim \mathfrak{aut}(\widehat{\mathcal C_x}).$$
By Proposition 8.4 of \cite{FuHw2018-2}, a general hyperplane section of $E_7/P_7$ is locally rigid, 
so we see that $h^1(S, T_S)=0$.
Therefore, the same argument as (i) works immediately. 
\qed

\section{Local rigidity of smooth projective symmetric varieties of type $G_2$}

%The smooth projective symmetric varieties with Picard number one whose restricted root system is of type $G_2$ are
%either the smooth equivariant completion of the symmetric homogeneous space $G_2/(\SL(2, \mathbb C) \times \SL(2, \mathbb C))$
%or the smooth equivariant completion of the symmetric homogeneous space $(G_2 \times G_2)/G_2$  
%from Theorem 2 of \cite{Ruzzi2010}. 
%Recently, 
The smooth equivariant completion $S$ with Picard number one of the symmetric space $G_2/(\SL(2, \mathbb C) \times \SL(2, \mathbb C))$, 
called the \emph{Cayley Grassmannian}, has been studied by Manivel \cite{M}. 
The Cayley Grassmannian is a smooth projective variety parametrizing four-dimensional subalgebras of the complexified octonions $\mathbb O_{\mathbb C}$. 
%{\color{blue}
%Because all subalgebras contain the unit element, there are two possible descriptions which are properly reduced by the unit element as a subvariety of a Grassmannian. }
%{\color{blue} 
Because all subalgebras contain the unit element, 
the Cayley Grassmannian is a closed subvariety of the Grassmannian $\Gr(3, 7)$ 
by considering only the imaginary parts. % of these subalgebras. 
%When we consider only the imaginary parts of the four-dimensional subalgebras contained in the imaginary part of complexified octonions $\mathbb O_{\mathbb C}$, 
%the Cayley Grassmannian is %parametrized by 
%a closed subvariety of the Grassmannian $\Gr(3, 7)$. 
It can be also described as a subvariety of the Grassmannian $\Gr(4, 7)$ %when we consider the orthogonal complement of the subalgebras contained in the imaginary part of %the algebra 
by mapping a subalgebra to its orthogonal complement contained in the imaginary part of $\mathbb O_{\mathbb C}$. %} %, is dual of $\Gr(3,7)$. 
From now, we will consider the Cayley Grassmannian as a subvariety of %the %dual Grassmannian 
$\Gr(4, 7)$.

\begin{proposition}[Proposition 3.2 of \cite{M}] 
\label{Description of the Cayley Grassmannian} 
The Cayley Grassmannian $S$ %smooth projective symmetric variety $S$ 
%The smooth equivariant completion $S$ with Picard number one of the symmetric homogeneous space $G_2/(\SL(2, \mathbb C) \times \SL(2, \mathbb C))$ 
is projectively equivalent to the zero locus of a general global section of the rank four vector bundle $\wedge^3 \mathcal U^*$ on the Grassmannian $\Gr(4, 7)$, 
where $\mathcal U$ denotes the universal subbundle of rank four on $\Gr(4, 7)$.
\end{proposition}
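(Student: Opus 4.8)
\emph{Proof plan.} The plan is to realize the Cayley Grassmannian, in its embedding into $\Gr(4,7)$ described above, as the zero scheme of the section of $\wedge^3\mathcal U^*$ cut out by the $G_2$-invariant $3$-form on the imaginary complexified octonions, and then to check that this section is general. Write $V=\operatorname{Im}\mathbb O_{\mathbb C}\cong\mathbb C^7$, with the (complex-bilinear extension of the) octonionic norm form $q$ and the cross product $u\times v=\operatorname{Im}(u\cdot v)$, and let $\phi\in\wedge^3V^*$ be the totally antisymmetric associative $3$-form $\phi(u,v,w)=q(u\times v,w)$. Every $4$-dimensional subalgebra $A\subset\mathbb O_{\mathbb C}$ contains the unit, so $A=\mathbb C\cdot 1\oplus W$ with $W=A\cap V$ of dimension $3$; since the scalar part $-q(u,v)\,1$ always lies in $A$, the subspace $A$ is a subalgebra exactly when $W$ is closed under $\times$. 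As $1\in A$, the $4$-dimensional space $U:=A^{\perp}$ lies in $V$ and has orthogonal complement $W$ in $V$, so the embedding $S\hookrightarrow\Gr(4,7)$, $A\mapsto U$, identifies $S$ with $\{\,U\in\Gr(4,7):U^{\perp}\text{ is closed under }\times\,\}$. On the other hand, restricting $\phi$ to the fibres of the universal subbundle $\mathcal U$ defines a global section $s_\phi\in H^0(\Gr(4,7),\wedge^3\mathcal U^*)$ with zero locus $Z(s_\phi)=\{\,U\in\Gr(4,7):\phi|_U=0\,\}=\{\,U:\times(\wedge^2 U)\subset U^{\perp}\,\}$, the second equality being immediate from the total antisymmetry of $\phi$ and the identity $\phi(u,v,w)=q(u\times v,w)$.

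First I would prove the inclusion $S\subset Z(s_\phi)$. At a general point $A$ is a non-degenerate subalgebra, hence isomorphic to the split quaternions, and the Cayley--Dickson splitting $\mathbb O_{\mathbb C}=A\oplus A^{\perp}$ gives $A^{\perp}\cdot A^{\perp}\subset A$; consequently $\times(\wedge^2 U)=\operatorname{Im}(U\cdot U)\subset\operatorname{Im}A=W=U^{\perp}$, that is, $\phi|_U=0$. Since the non-degenerate locus is dense in the irreducible variety $S$ and $Z(s_\phi)$ is closed, this gives $S\subset Z(s_\phi)$. To upgrade the inclusion to an equality I would invoke that $\phi$ is a general $3$-form: the alternating $3$-forms on $\mathbb C^7$ form a single dense $\GL_7$-orbit --- the forms whose stabilizer is a conjugate of $G_2$ --- so $\phi$ lies in it; moreover $H^0(\Gr(4,7),\wedge^3\mathcal U^*)\cong\wedge^3V^*$ by the Borel--Weil--Bott theorem (or because $\wedge^3\mathcal U^*$ is a quotient of the trivial bundle $\wedge^3V^*\otimes\mathcal O$ whose kernel contributes no global sections), so $s_\phi$ corresponds to a general point of $\mathbb PH^0$. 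Now $\wedge^3\mathcal U^*$ is globally generated and in fact ample --- the classifying morphism $\Gr(4,7)\to\Gr\bigl(4,\wedge^3(\mathbb C^7)^*\bigr)$, $U\mapsto\wedge^3U^*$, is injective and hence finite --- so by Bertini together with the Lefschetz-type connectedness theorem for ample vector bundles, the zero locus of a general section is smooth, connected, and of the expected dimension $\dim\Gr(4,7)-\rank\wedge^3\mathcal U^*=12-4=8=\dim S$. An irreducible closed subvariety of dimension $\dim S$ contained in such a zero locus exhausts it, so $S=Z(s_\phi)$. Finally, any general section of $\wedge^3\mathcal U^*$ is $\GL(V)$-conjugate to $s_\phi$, and $\GL(V)$ acts linearly on $\Gr(4,7)\subset\mathbb P(\wedge^4V)$, so $S$ is projectively equivalent to the zero locus of an arbitrary general global section of $\wedge^3\mathcal U^*$.

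The main obstacle is the inclusion $S\subset Z(s_\phi)$, i.e.\ passing from ``$W$ is closed under $\times$'' to ``$\phi$ vanishes on $W^{\perp}$''; this is the complex-algebraic form of the Harvey--Lawson duality between associative $3$-planes and coassociative $4$-planes, and although the Cayley--Dickson splitting settles the non-degenerate subspaces cleanly, some care is required to reach the subspaces that are degenerate for $q$, which is why I argue through density in $S$ and closedness of $Z(s_\phi)$ rather than through a pointwise equivalence. The remaining ingredients --- that $\phi$ lies in the dense orbit of $3$-forms, the identification of $H^0(\Gr(4,7),\wedge^3\mathcal U^*)$, and the smoothness and connectedness of a general zero locus --- are standard.
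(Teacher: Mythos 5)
The paper does not prove this statement itself (it quotes Proposition 3.2 of Manivel's paper \cite{M}), so your attempt has to stand on its own. Its overall strategy --- realize the section as the generic $3$-form $\phi$, prove $S\subset Z(s_\phi)$ via Cayley--Dickson on the dense locus of nondegenerate subalgebras, and conclude equality by a dimension count, with the $\GL_7$-orbit argument giving projective equivalence for an arbitrary general section --- is sound and essentially the expected one. But the step that makes the dimension count conclusive is flawed: $\wedge^3\mathcal U^*$ is \emph{not} ample on $\Gr(4,7)$. On a line $\ell=\{U: U_3\subset U\subset U_5\}$ one has $\mathcal U|_\ell\cong\mathcal O^{\oplus 3}\oplus\mathcal O(-1)$, hence $\wedge^3\mathcal U^*|_\ell\cong\mathcal O\oplus\mathcal O(1)^{\oplus 3}$ has a trivial direct summand, whereas an ample bundle restricts to an ample bundle on every curve. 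The criterion you offer (injectivity of the classifying morphism to $\Gr\bigl(4,\wedge^3(\mathbb C^7)^*\bigr)$ implies ampleness) is not valid for bundles of higher rank: already $\mathcal O\oplus\mathcal O(1)$ on $\mathbb P^1$ is globally generated with injective classifying map but is not ample. Consequently the Sommese/Fulton--Lazarsfeld-type connectedness theorem you invoke does not apply, and what your argument actually yields is only that $S$ is a union of irreducible components of $Z(s_\phi)$ (Bertini for globally generated bundles does give smoothness and pure dimension $8$ for a general section, and the $\GL_7$-orbit argument legitimately transfers this to $s_\phi$); the possibility of further components is not excluded.

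The gap is fillable, but by a different mechanism: connectedness of $Z(s_\phi)$ follows from the Koszul complex $0\to\mathcal O(-3)\to\mathcal U(-2)\to\wedge^2\mathcal U(-1)\to\wedge^3\mathcal U\to\mathcal O\to\mathcal O_{Z}\to 0$ together with the Borel--Weil--Bott theorem, which shows the four left-hand terms contribute nothing in the relevant degrees and hence $H^0(Z,\mathcal O_Z)=\mathbb C$; this is exactly the kind of computation carried out in Section 4 of this paper, and it is also implicit in Manivel's own treatment via the $G_2$-orbit structure of the zero locus. With connectedness in hand, smoothness and pure dimension $8$ give irreducibility, and your inclusion then forces $S=Z(s_\phi)$. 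A secondary caution: you assert that the nondegenerate subalgebras are dense in ``the irreducible variety $S$''; this is automatic if $S$ is taken, as in this paper, to be the closure in $\Gr(4,7)$ of the $G_2$-orbit of (the orthogonal complement of) a split quaternion subalgebra, but it is not immediate for the full locus $\{U: U^{\perp}\text{ closed under }\times\}$, whose irreducibility you have not established; phrasing the argument through the orbit closure avoids assuming it.
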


\begin{remark} 
%Note that 
The symmetric variety $S$ is a Fano eightfold of index 4 by the adjunction formula. 
Indeed, $K_S = K_{\Gr(4,7)} \otimes \mbox{det} (\wedge^3 \mathcal U^*) = \mathcal O(-7) \otimes \mathcal O(3) = \mathcal O(-4)$. 
This implies that the VMRT of $S$ at a general point is isomorphic to a surface embedded in $\mathbb P^7$. 
\end{remark}

%\begin{theorem}
%The symmetric variety $S$ of $G_2/(\SL(2, \mathbb C) \times \SL(2, \mathbb C))$  is locally rigid. 
%\end{theorem}

%Kodaira and Spencer showed that an infinitesimal deformation of a compact complex manifold should be represented by the \emph{Kodaira-Spencer class}
%which is an element of the first cohomology group with coefficients in the sheaf of germs of holomorphic vector fields (cf. \cite{Kodaira}). 
%Thus, 
From the Kodaira-Spencer deformation theory, 
it suffices to prove $H^1(S, T_S) = 0$ for Theorem \ref{Deformation rigidity of Cayley Grassmannian}.
Now, we recall Borel-Weil-Bott theorem to compute the cohomology groups of equivariant vector bundles on a rational homogeneous variety $G/P$. 

Let $G$ be a simply connected complex semisimple algebraic group and $P \subset G$ a parabolic subgroup. 
For an integral dominant weight $\omega$ with respect to $P$, 
we have an irreducible representation $V(\omega)$ of $P$ with the highest weight $\omega$, and 
denote by $\mathcal E_{\omega}$ the corresponding \emph{irreducible equivariant vector bundle} $G\times_P V(\omega)$ on $G/P$:
$$\mathcal E_{\omega}:= G \times_P V(\omega) = (G \times V(\omega))/P,$$
where the equivalence relation is given by $(g, v) \sim (gp, p^{-1} . v)$ for $p \in P$.

\begin{theorem}[Borel-Weil-Bott theorem \cite{Bo57}] 
%Let $G$ be a simply connected complex semisimple algebraic group and $P \subset G$ a parabolic subgroup.
%Suppose that $\omega$ is an integral weight for $G$ dominant with respect to $P$. 
Let $\rho$ denote the sum of fundamental weights of $G$.
\begin{itemize}
\item If a weight $\omega+\rho$ is singular, that is, it is orthogonal to some (positive) root of $G$, % equivalently, it lies on a wall of a Weyl chamber, 
then all cohomology groups $H^i(G/P, \mathcal E_{\omega})$ vanish for all $i$. 
\item Otherwise, $\omega+\rho$ is regular, that is, it lies in the interior of some Weyl chamber, 
then $H^{\ell(w)}(G/P, \mathcal E_{\omega})=V_G(w(\omega+\rho)-\rho)^*$ and any other cohomology vanishes. 
Here, $w\in W$ is a unique element of the Weyl group of $G$ such that $w(\omega+\rho)$ is strictly dominant, 
and $\ell(w)$ means the length of $w \in W$, that is, the minimal integer $\ell(w)$ such that $w$ can be expressed as a product of $\ell(w)$ simple reflections.  
\end{itemize}
\end{theorem}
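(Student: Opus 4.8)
The plan is to prove the statement in the classical way: reduce the general parabolic case to the case of the full flag variety $G/B$ and line bundles, establish the Borel--Weil theorem for dominant weights as a base case, and then transport the answer to an arbitrary weight by an induction along $\mathbb{P}^1$-fibrations governed by the ``dot'' action $w \cdot \lambda := w(\lambda + \rho) - \rho$ of the Weyl group $W$. First I would reduce to the Borel subgroup $B \subset P$. Let $\pi \colon G/B \to G/P$ be the projection, whose fibers are isomorphic to the flag variety of the Levi factor $L$ of $P$, and write $\mathcal{L}_\omega := G \times_B \mathbb{C}_\omega$ for the line bundle attached to the character $\omega$ of $B$. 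Since $\omega$ is $L$-dominant, Borel--Weil applied fiberwise gives $R^0 \pi_* \mathcal{L}_\omega = \mathcal{E}_\omega$ and $R^q \pi_* \mathcal{L}_\omega = 0$ for $q > 0$; the Leray spectral sequence then degenerates and yields $H^i(G/P, \mathcal{E}_\omega) \cong H^i(G/B, \mathcal{L}_\omega)$ for all $i$. This reduces the theorem to computing $H^\bullet(G/B, \mathcal{L}_\omega)$ for an arbitrary weight $\omega$.

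For the base case I would prove Borel--Weil: if $\omega$ is dominant then $H^0(G/B, \mathcal{L}_\omega) \cong V_G(\omega)^*$ and $H^i(G/B, \mathcal{L}_\omega) = 0$ for $i > 0$. The identification of $H^0$ follows from Frobenius reciprocity, realizing global sections as the functions on $G$ transforming by $\omega$ under $B$, whose span is the irreducible $G$-module of extreme weight $\omega$. The vanishing of higher cohomology for dominant $\omega$ is the delicate ingredient; I would take it from Kempf's vanishing theorem (equivalently, obtain it together with the inductive step below).

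The main mechanism is the following $\mathbb{P}^1$-fibration lemma. For a simple root $\alpha_i$ let $P_i \supset B$ be the minimal parabolic and $p_i \colon G/B \to G/P_i$ the projection, whose fibers are isomorphic to $\mathbb{P}^1$; the restriction of $\mathcal{L}_\omega$ to a fiber is $\mathcal{O}_{\mathbb{P}^1}(m)$ with $m = \langle \omega, \alpha_i^{\vee} \rangle$. Writing $n := \langle \omega + \rho, \alpha_i^{\vee} \rangle = m + 1$ and computing $R^\bullet (p_i)_* \mathcal{L}_\omega$ from the cohomology of $\mathcal{O}(m)$ on the fibers, the Leray spectral sequence degenerates and gives: if $n = 0$ then $H^\bullet(G/B, \mathcal{L}_\omega) = 0$ in every degree; if $n < 0$ then relative duality on the fiber yields $R^1 (p_i)_* \mathcal{L}_\omega \cong R^0 (p_i)_* \mathcal{L}_{s_i \cdot \omega}$, hence a degree-shifting isomorphism $H^k(G/B, \mathcal{L}_\omega) \cong H^{k-1}(G/B, \mathcal{L}_{s_i \cdot \omega})$ for all $k$, where $s_i \cdot \omega = \omega - n\alpha_i$ satisfies $\langle s_i \cdot \omega + \rho, \alpha_i^{\vee} \rangle = -n > 0$. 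Now fix an arbitrary $\omega$ and choose $w \in W$ with $\lambda := w(\omega + \rho)$ weakly dominant. If $\omega + \rho$ is singular then so is $\lambda$, and a weakly dominant singular weight is orthogonal to some simple coroot $\alpha_i^{\vee}$ (because $\alpha_i^{\vee}$ expresses the relevant positive coroot with nonnegative coefficients); applying the $n = 0$ case to $w \cdot \omega = \lambda - \rho$ forces $H^\bullet(G/B, \mathcal{L}_{w \cdot \omega}) = 0$, and propagating back through the degree-shift isomorphisms gives $H^\bullet(G/B, \mathcal{L}_\omega) = 0$. If $\omega + \rho$ is regular then $\lambda$ is strictly dominant and $w$ is unique; choosing a reduced expression for $w$ and applying the degree shift once for each of the $\ell(w)$ wall crossings that move $\omega + \rho$ monotonically into the dominant chamber, the degree drops by one at every step and one arrives at the dominant weight $w \cdot \omega$. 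This gives $H^{\ell(w)}(G/B, \mathcal{L}_\omega) \cong H^0(G/B, \mathcal{L}_{w \cdot \omega}) \cong V_G(w(\omega + \rho) - \rho)^*$ with vanishing in all other degrees, by the Borel--Weil base case. Transporting back along $\pi$ yields the theorem for $G/P$.

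The hard part will be the $\mathbb{P}^1$-fibration lemma, and in particular the uniform vanishing of higher cohomology. Kodaira--Nakano vanishing gives $H^{>0}(G/B, \mathcal{L}_\omega) = 0$ only when $\omega$ lies deep enough inside the dominant chamber, namely when $\mathcal{L}_\omega \otimes K_{G/B}^{-1}$ is ample, so it does not cover all dominant $\omega$ by itself. I would therefore interleave the vanishing with the degree-shift induction in the style of Demazure: the spectral sequence of $p_i$ degenerates precisely because the fiberwise cohomology of $\mathcal{L}_\omega|_{\mathbb{P}^1} = \mathcal{O}(n-1)$ is concentrated in a single cohomological degree, and a careful bookkeeping of these concentrations along a reduced word simultaneously establishes the concentration in degree $\ell(w)$, the vanishing in all other degrees, and the higher-cohomology vanishing of the dominant base case. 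The residual combinatorics---that the element $w$ and its length are exactly as in the statement, that the chosen reduced word realizes the wall crossings monotonically, and that the dot action tracks them correctly---is routine Weyl-group bookkeeping.
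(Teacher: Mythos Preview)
The paper does not contain a proof of this statement: the Borel--Weil--Bott theorem is stated as a classical result, cited from Bott's original paper \cite{Bo57}, and then used as a black box in the proof of Theorem~\ref{Deformation rigidity of Cayley Grassmannian}. There is therefore no ``paper's own proof'' to compare your proposal against.

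That said, your outline is the standard Demazure argument and is correct in its architecture: the reduction from $G/P$ to $G/B$ via the degenerate Leray spectral sequence, the Borel--Weil base case, and the inductive step through the $\mathbb{P}^1$-fibrations $p_i \colon G/B \to G/P_i$ governed by the dot action $s_i \cdot \omega$. The one point you flag as ``the hard part'' is accurately identified: making the degree-shift isomorphism $H^k(G/B,\mathcal{L}_\omega) \cong H^{k-1}(G/B,\mathcal{L}_{s_i\cdot\omega})$ rigorous requires more than fiberwise Serre duality on $\mathbb{P}^1$---one needs the identification of $R^1(p_i)_*\mathcal{L}_\omega$ with $R^0(p_i)_*\mathcal{L}_{s_i\cdot\omega}$ as $G$-equivariant bundles on $G/P_i$, which is where the real work lies (and where Demazure's argument, or alternatively Kempf's vanishing, enters). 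Your acknowledgment that the reduced-word combinatorics guarantees monotone wall crossings is also correct, though in a full write-up you would want to invoke the standard fact that for a reduced expression $w=s_{i_1}\cdots s_{i_\ell}$ the partial products applied to a dominant regular weight cross exactly one wall at each step.
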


\noindent {\it Proof of Theorem \ref{Deformation rigidity of Cayley Grassmannian}.} 
%From the geometric description of $S$, %Proposition \ref{Description of the Cayley Grassmannian} 
%we get the normal exact sequence 
%$$ 0 \to T_S \to T_{\Gr(4,7)}|_S \to \wedge^3 \mathcal U^* |_S \to 0.$$
Since $S$ is the zero locus of a general global section of $\wedge^3 \mathcal U^*$ on $\Gr(4, 7)$, 
we have the normal exact sequence on $S$ 
$$ 0 \to T_S \to T_{\Gr(4,7)}|_S \to \wedge^3 \mathcal U^* |_S \to 0$$
and the Koszul complex of the structure sheaf $\mathcal O_S$ 
$$0 \to \wedge^4 (\wedge^3 \mathcal U) \to \wedge^3 (\wedge^3 \mathcal U) \to \wedge^2 (\wedge^3 \mathcal U) \to  \wedge^3 \mathcal U \to \mathcal O_{\Gr(4,7)} \to \mathcal O_S \to 0.$$
Using the isomorphisms $\wedge^4 \mathcal U \cong \mathcal O(-1)$ and $\wedge^3 \mathcal U \cong \mathcal U^*(-1)$ on $\Gr(4, 7)$, 
we get an exact sequence 
$$0 \to \mathcal O_{\Gr(4,7)}(-3) \to \mathcal U(-2) \to \wedge^2 \mathcal U(-1) \to \wedge^3 \mathcal U \to \mathcal O_{\Gr(4,7)} \to \mathcal O_S \to 0.$$
Indeed, $\wedge^3 (\wedge^3 \mathcal U) \cong \wedge^3 (\mathcal U^*(-1)) = \wedge^3 \mathcal U^* \otimes \mathcal O(-3) 
\cong \mathcal U(1) \otimes \mathcal O(-3) = \mathcal U(-2)$.   
Taking the tensor product of the Koszul complex 
%$$0 \to \mathcal O_S(-3) \to \mathcal U(-2) \to \wedge^2 \mathcal U(-1) \to \wedge^3 \mathcal U \to \mathcal O_{\Gr(4,7)} \to \mathcal O_S \to 0 $$
with $\wedge^3 \mathcal U^*$, we have 
$$0 \to \wedge^3 \mathcal U^*(-3) \to \mathcal U(-2) \otimes \wedge^3 \mathcal U^* 
\to \wedge^2 \mathcal U(-1) \otimes \wedge^3 \mathcal U^* \to \wedge^3 \mathcal U \otimes \wedge^3 \mathcal U^* 
\to \wedge^3 \mathcal U^* \to \wedge^3 \mathcal U^*|_S \to 0 .$$

Let $\omega_1, \cdots, \omega_6$ be the fundamental weights of $\SL(7, \mathbb C)$. 
Since $\wedge^3 \mathcal U^*(-3)$ is the irreducible equivariant vector bundle associated with the weight $\omega_3 - 3 \omega_4$  
%since the highest weight of $\wedge^3 \mathcal U^*(-3)$ is $\omega_3 - 3 \omega_4$ 
and the weight $\omega_3 - 3 \omega_4+\rho$ is singular, as a straightforward application of the Borel-Weil-Bott theorem, 
we see that $H^{i}(\Gr(4,7), \wedge^3 \mathcal U^*(-3) )=0$ for all $i$.
%since $\wedge^3 \mathcal U^*(-3)$ is the irreducible equivariant vector bundle associated with the weight $\omega_3 - 3 \omega_4$ 
%since the highest weight of $\wedge^3 \mathcal U^*(-3)$ is $\omega_3 - 3 \omega_4$ 
%and the weight $\omega_3 - 3 \omega_4+\rho$ is singular, 
%where $\omega_1, \cdots, \omega_6$ denote the fundamental weights of $\SL(7, \mathbb C)$. 
Also, because 
$$\mathcal U(-2) \otimes \wedge^3 \mathcal U^* \cong \mathcal U(-2) \otimes \mathcal U(1) 
\cong (\wedge^2 \mathcal U \oplus \Sym^2 \mathcal U) \otimes \mathcal O(-1) = \mathcal E_{\omega_2 - 2\omega_4} \oplus \mathcal E_{2\omega_3 - 3\omega_4}$$ 
and both $\omega_2 - 2\omega_4+\rho$ and $2\omega_3 - 3\omega_4+\rho$ are singular weights, 
we have that $H^{i}(\Gr(4,7), \mathcal U(-2) \otimes \wedge^3 \mathcal U^* )=0$ for all $i$. 
From the Littlewood-Richardson rule (see Section 2.3 of \cite{W} for details), 
we can check that $\wedge^2 \mathcal U(-1) \otimes \wedge^3 \mathcal U^* \cong \mathcal E_{\omega_1 - \omega_4} \oplus \mathcal E_{\omega_2 + \omega_3 - 2\omega_4}$ 
which implies that $H^{i}(\Gr(4,7), \wedge^2 \mathcal U(-1) \otimes \wedge^3 \mathcal U^*)=0$ for all $i$. 
%Hence, we deduce the previous Koszul complex to
%$$0 \to %\wedge^3 \mathcal U^*(-3) \to \mathcal U(-2) \otimes \wedge^3 \mathcal U^* \to \wedge^2 \mathcal U(-1) \otimes \wedge^3 \mathcal U^* \to
% \wedge^3 \mathcal U \otimes \wedge^3 \mathcal U^* \to \wedge^3 \mathcal U^* \to \wedge^3 \mathcal U^*|_S \to 0 .$$
Since we know that $\wedge^3 \mathcal U \otimes \wedge^3 \mathcal U^* \cong \mathcal E_{\omega_1 + \omega_3 - \omega_4} \oplus \mathcal O$  by the Littlewood-Richardson rule, 
%we apply the Borel-Weil-Bott theorem and 
$H^i(\Gr(4,7), \wedge^3 \mathcal U \otimes \wedge^3 \mathcal U^* ) = 0$ for $i>0$ and 
$H^0(\Gr(4,7), \wedge^3 \mathcal U \otimes \wedge^3 \mathcal U^* ) = H^0(\Gr(4,7), \mathcal O)= \mathbb C$.
%Similarly, $H^0(\Gr(4,7), \wedge^3 \mathcal U \otimes \wedge^3 \mathcal U^* ) = H^0(\Gr(4,7), \mathcal O) = \mathbb C$ and $H^i(\Gr(4,7), \wedge^3 \mathcal U \otimes \wedge^3 \mathcal U^* ) = 0$ for $i>0$ from $\wedge^3 \mathcal U \otimes \wedge^3 \mathcal U^* \cong \mathcal E_{\omega_1 + \omega_3}(-1) \oplus \mathcal O$.
Again, the Borel-Weil theorem says that  $H^0(\Gr(4,7), \wedge^3 \mathcal U^* ) = \wedge^3 \mathbb C^7$. 
%From the Littlewood-Richardson rule (see \cite{W} for details), we can check $\wedge^3 \mathcal U \otimes \wedge^3 \mathcal U^* \cong \mathcal E_{\omega_1 + \omega_3}(-1) \oplus \mathcal O$, 
%where $\omega_1, \cdots, \omega_6$ denote the fundamental weights of $\SL(7, \mathbb C)$. 
%where $\mathcal E_{\omega}$ denotes the irreducible equivariant vector bundle associated with an integral weight $\omega$. 
%Thus the Borel-Weil theorem says that $H^0(\Gr(4,7), \wedge^3 \mathcal U \otimes \wedge^3 \mathcal U^* ) = H^0(\Gr(4,7), \mathcal O) = \mathbb C$. 
%Recall that the bundle $\wedge^3 \mathcal U^*(-3)$ is the irreducible equivariant vector bundle $\mathcal E_{\omega_3 - 3 \omega_4}$ associated with the weight $\omega_3 - 3 \omega_4$. 
%Because the weight $\omega_3 - 3 \omega_4+\rho$ is singular, %, where $\rho$ is the sum of all fundamental weights, 
%$H^1(\Gr(4,7), \wedge^3 \mathcal U^*(-3)) = 0$ by the Borel-Weil-Bott theorem. 
Therefore, we conclude $H^0(S, \wedge^3 \mathcal U^* |_S)=\wedge^3 \mathbb C^7 / \mathbb C$. 

Using the Littlewood-Richardson rule, 
we get the Koszul complex of the structure sheaf $\mathcal O_S$ tensored with the tangent bundle $T_{\Gr(4,7)}=\mathcal U \otimes \mathcal Q = \mathcal E_{\omega_1 + \omega_6}$:  
%we get an exact sequence 
\begin{eqnarray*}
0  \ \to  \ \mathcal E_{\omega_1 -3\omega_4 + \omega_6} 
&\to& \mathcal E_{\omega_1 + \omega_3 - 3\omega_4 + \omega_6} \oplus \mathcal E_{- 2\omega_4 + \omega_6} \
\to \ \mathcal E_{\omega_1 + \omega_2 - 2\omega_4 + \omega_6} \oplus \mathcal E_{\omega_3 - 2\omega_4 + \omega_6} \\ 
&\to& \mathcal E_{2\omega_1 - \omega_4 + \omega_6} \oplus \mathcal E_{\omega_2 - \omega_4 + \omega_6} 
\ \to \ T_{\Gr(4,7)} \ \to  \ T_{\Gr(4,7)}|_S \ \to \ 0.
\end{eqnarray*}
Since all bundles except the last two terms are acyclic, using the Borel-Weil-Bott theorem again, 
we obtain $H^0(S, T_{\Gr(4,7)}|_S)= H^0(\Gr(4,7), T_{\Gr(4,7)}) = \mathfrak{sl}_7$ and $H^1(S, T_{\Gr(4,7)}|_S)= H^1(\Gr(4,7), T_{\Gr(4,7)})=0$. 

%By the Borel-Weil-Bott theorem again and the Koszul complex tensored with the tangent bundle $T_{\Gr(4,7)}$, 
%we obtain $H^0(S, T_{\Gr(4,7)}|_S)= H^0(\Gr(4,7), T_{\Gr(4,7)}) = \mathfrak{sl}_7$ and $H^1(S, T_{\Gr(4,7)}|_S)= H^1(\Gr(4,7), T_{\Gr(4,7)})=0$. 
%In fact, using $T_{\Gr(4,7)}= \mathcal U \otimes \mathcal Q = \mathcal E_{\omega_1 + \omega_6}$ and the Littlewood-Richardson rule, 
%we get an exact sequence 
%$$0 \to T_{\Gr(4,7)}(-3) \to \mathcal U(-2) \otimes T_{\Gr(4,7)} \to \wedge^2 \mathcal U(-1) \otimes T_{\Gr(4,7)} \to \wedge^3 \mathcal U \otimes T_{\Gr(4,7)} \to T_{\Gr(4,7)} \to T_{\Gr(4,7)}|_S \to 0$$
%\begin{eqnarray*}0 &\to& \mathcal E_{\omega_1 -3\omega_4 + \omega_6} \to \mathcal E_{\omega_1 + \omega_3 - 3\omega_4 + \omega_6} \oplus \mathcal E_{- 2\omega_4 + \omega_6} 
%\to \mathcal E_{\omega_1 + \omega_2 - 2\omega_4 + \omega_6} \oplus \mathcal E_{\omega_3 - 2\omega_4 + \omega_6} \\ &\to& \mathcal E_{2\omega_1 - \omega_4 + \omega_6} \oplus \mathcal E_{\omega_2 - \omega_4 + \omega_6} \to T_{\Gr(4,7)} \to T_{\Gr(4,7)}|_S \to 0.
%\end{eqnarray*}
%Since all bundles except the last two terms are acyclic, the computation of the cohomology $H^i(S, T_{\Gr(4,7)}|_S)$ immediately follows. 

Then, from the normal exact sequence on $S$, we deduce an exact sequence 
$$0 \to H^0(S, T_S) \to \mathfrak{sl}_7 \to \wedge^3 \mathbb C^7 / \mathbb C \to H^1(S, T_S) \to 0.$$
Hence, $H^0(S, T_S) = \mathfrak{aut}(S) =\mathfrak g_2$ (Lemma 16 of \cite{Ruzzi2010}) implies $H^1(S, T_S)=0$ 
from which the local rigidity of $S$ follows by the Kodaira-Spencer deformation theory.
\qed

\begin{remark} 
By Theorem 2 of \cite{Ruzzi2010}, the smooth projective symmetric varieties with Picard number one whose restricted root system is of type $G_2$
are either the Cayley Grassmannian or the smooth equivariant completion of $(G_2 \times G_2)/G_2$.
%It would be interesting to study geometric properties of the smooth equivariant completion with Picard number one of the symmetric homogeneous space $(G_2 \times G_2)/G_2$. 
Recently, Manivel also studied the latter, %smooth equivariant completion with Picard number one of the symmetric homogeneous space $(G_2 \times G_2)/G_2$, 
called the \emph{double Cayley Grassmannian}, 
and proved that it is locally rigid in \cite{M19}. 
The double Cayley Grassmannian is projectively equivalent to the zero locus of a general global section of the rank seven vector bundle $\mathcal U \otimes \mathcal L$ on the 21-dimensional spinor variety $\mathbb S_{14}=\Spin (14, \mathbb C)/P_7$, 
where $\mathcal U$ denotes the tautological bundle of rank seven on $\mathbb S_{14}$ and 
$\mathcal L$ is the very ample line bundle defining the minimal embedding $\mathbb S_{14} \hookrightarrow \mathbb P^{63}$.
Consequently, we conclude that all smooth projective symmetric varieties with Picard number one are locally rigid.  
%In particular,
On the other hand, the global deformation rigidity problem on smooth projective symmetric varieties of type $G_2$ remains open. 
%As the geometric descriptions of the Cayley Grassmannian, it is an interesting problem to describe the other %smooth projective symmetric variety of type $G_2$, 
%which is the smooth equivariant completion with Picard number one of the symmetric homogeneous space %$(G_2 \times G_2)/G_2$. 
%We know that the restricted root system of smooth projective symmetric varieties with Picard number one has type either $A_2$ or $G_2$ from Theorem 1 of \cite{Ruzzi2010}. 
%The global deformation rigidity problem of smooth projective symmetric varieties with Picard number one whose restricted root system is of type $G_2$ remains open. 
%Recently, the smooth equivariant completion of the symmetric space $G_2/(\SL(2, \mathbb C) \times \SL(2, \mathbb C))$, 
%called the \emph{Cayley Grassmannian}, was studied by Manivel \cite{M}. 
%Coupling this geometric description and the normal exact sequence leads to the local deformation rigidity of the Cayley Grassmannian. 
\end{remark}

\end{document}